\newtheorem{theorem}{Theorem}[section]
\newtheorem{lemma}{Lemma}[section]
\newtheorem{proposition}{Proposition}[section]
\newtheorem{remark}{Remark}[section]
\numberwithin{equation}{section}
\newenvironment{proof}{\medskip\par\noindent{\bf Proof.}\ }{\qquad
\raisebox{-0.5mm}{\rule{1.5mm}{4mm}}\vspace{6pt}}
\newcommand{\bbr}{\mathbb{R}}
\newcommand{\h}{H^1(\bbr^3)}
\newcommand{\D}{D^{1,2}(\bbr^3)}
\newcommand{\bbn}{\mathbb{N}}
\newcommand{\ve}{\varepsilon}
\begin{document}

\title
{\Large\bf On the Schr\"odinger-Poisson system with steep potential well and indefinite potential}

\author{Juntao Sun$^{a},$\thanks{E-mail address: sunjuntao2008@163.com(Juntao Sun)}\quad
Tsung-fang Wu$^b,$\thanks{E-mail address:  tfwu@nuk.edu.tw (Tsung-fang Wu)}
\quad
Yuanze Wu$^c$\thanks{Corresponding author.  E-mail address:  wuyz850306@cumt.edu.cn (Yuanze Wu)}\\
\footnotesize$^a${\em School of Science, Shandong University of Technology, Zibo, 255049, P.R. China }\\
\footnotesize$^b${\em  Department of Applied Mathematics, National University of Kaohsiung, Kaohsiung 811, Taiwan }\\
\footnotesize$^c${\em  College of Sciences, China
University of Mining and Technology, Xuzhou 221116, P.R. China  }}
\date{}
\maketitle

\date{}
\maketitle

\noindent{\bf Abstract:} In this paper, we study the following Schr\"odinger-Poisson system:
$$
\left\{\aligned&-\Delta u+V_\lambda(x)u+K(x)\phi u=f(x,u)&\quad\text{in }\bbr^3,\\
&-\Delta\phi=K(x)u^2&\quad\text{in }\bbr^3,\\
&(u,\phi)\in\h\times\D,\endaligned\right.\eqno{(\mathcal{SP}_{\lambda})}
$$
where $V_\lambda(x)=\lambda a(x)+b(x)$ with a positive parameter $\lambda$, $K(x)\geq0$ and $f(x,t)$ is continuous including the power-type nonlinearity $|u|^{p-2}u$.  By applying the method of penalized functions, the existence of one nontrivial solution for such system in the less-studied case $3<p\leq4$ is obtained for $\lambda$ sufficiently large.  The concentration behavior of this nontrivial solution for $\lambda\to+\infty$ are also observed.  It is worth to point out that some new conditions on the potentials are introduced to obtain this nontrivial solution and the Schr\"odinger operator $-\Delta+V_\lambda(x)$ may be strong indefinite in this paper.

\vspace{6mm} \noindent{\bf Keywords:} Schr\"odinger-Poisson system; Steep potential well; Strong indefinite problem.


\section{Introduction}
In this paper, we consider the following Schr\"odinger-Poisson system:
$$
\left\{\aligned&-\Delta u+V_\lambda(x)u+K(x)\phi u=f(x,u)&\quad\text{in }\bbr^3,\\
&-\Delta\phi=K(x)u^2&\quad\text{in }\bbr^3,\\
&(u,\phi)\in\h\times\D,\endaligned\right.\eqno{(\mathcal{SP}_{\lambda})}
$$
where $V_\lambda(x)=\lambda a(x)+b(x)$ with a positive parameter $\lambda$, $K(x)\geq0$ is a weight function and $f(x,t)$ is a continuous nonlinearity.

The Schr\"odinger-Poisson system~$(\mathcal{SP}_{\lambda})$ arises in quantum mechanic.  It was first introduced in \cite{BF98} as a physical model describing a charged wave interacting with its own electrostatic field.  In such system, the unknowns $u$ and $\phi$ represent the wave functions associated to the particle and electric potential, and functions $V_\lambda(x)$ and $K(x)$ are respectively an external potential and nonnegative density charge.  This system also arises in the electromagnetic field, semiconductor theory, nonlinear optics and plasma physics and sometimes it is called as the Scor\"odinger-Maxwell system.  Due to the important applications in physics, there has been a vast literature on the study of the existence and multiplicity for nontrivial solutions to the system~$(\mathcal{SP}_{0})$ under various hypotheses on the potentials $b(x)$ and $K(x)$ and the nonlinearities $f(x,t)$.  We refer the readers to \cite{AD08,A10,CKW13,CV10,I13,IR12,K07,LWZ14,R06,R10,SCJ12,WZ09} and the references therein.

In this paper, we will investigate the Schr\"odinger-Poisson system~$(\mathcal{SP}_{\lambda})$ for $\lambda>0$ and assume $a(x)$ satisfies the following conditions:
\begin{enumerate}
\item[$(A_1)$] $a(x)\in C(\bbr^3, \bbr)$ and $a(x)$ is bounded below.
\item[$(A_2)$] There exists $a_\infty>0$ such that $|\mathcal{A}_\infty|<+\infty$, where $\mathcal{A}_\infty=\{x\in\bbr^3\mid a(x)<a_\infty\}$ and $|\mathcal{A}_\infty|$ is the Lebesgue measure of the set $\mathcal{A}_\infty$.
\item[$(A_3)$] $\Omega_a=$int$a^{-1}(0)$ is a nonempty open set and has smooth boundary with $\overline{\Omega}_a=a^{-1}(0)$.
\item[$(A_4)$] dist$(\Omega_a,\bbr^3\backslash\mathcal{A}_\infty)>0$, where dist$(\Omega_a,\bbr^3\backslash\mathcal{A}_\infty)=\inf\{|x-y|\mid x\in\Omega_a,y\in\bbr^3\backslash\mathcal{A}_\infty\}$.
\end{enumerate}
\begin{remark}
If $\Omega_a$ is bounded, then by the condition $(A_1)$, the assumption $(A_4)$ is trivial.  However, under the conditions $(A_1)$-$(A_4)$, $\Omega_a$ is allowed to be unbounded.  Furthermore, $a(x)$ is allowed to be sign-changing under the conditions $(A_1)$--$(A_4)$.
\end{remark}
Under the conditions $(A_1)$--$(A_3)$, $\lambda a(x)$ is called as the steep potential well for $\lambda$ sufficiently large and the depth of the well is controlled by the parameter $\lambda$.  Such potentials were first introduced by Bartsch and Wang in \cite{BW95} for the scalar Schr\"odinger equations.  An interesting phenomenon for this kind of Schr\"odinger equations is that, one can expect to find the solutions which are concentrated at the bottom of the wells as the depth goes to infinity.  Due to this interesting property, such topic for the scalar Schr\"odinger equations was studied extensively in the past decade.  We refer the readers to \cite{BW00,BPW01,BT13,DT03,DS07,LHL11,ST09,WZ09} and the references therein.  Recently, the steep potential well was also considered for some other elliptic equations and systems, see for example \cite{FSX10,GT121,SW14,SW141,YT14,ZLZ13} and the references therein.  In particular, the steep potential well was introduced to the Schr\"odinger-Poisson system by Jiang and Zhou in \cite{JZ11}, where System~$(\mathcal{SP}_{\lambda})$ with $a(x)\geq0$, $b(x)=K(x)=1$ and $f(x,u)=|u|^{p-2}u$ are considered.  They obtained the existence results of one nontrivial solution for the case $p\in (2,3)\cup \lbrack 4,6)$ if $\lambda$ is sufficiently large and the concentration behavior of the nontrivial solutions for $\lambda\to+\infty$ was also observed in their paper.  System~$(\mathcal{SP}_{\lambda})$ with $b(x)=0$ and $f(x,u)=|u|^{p-2}u$ if $a(x)$ satisfies the conditions $(A_1)$--$(A_3)$ was also investigated by Zhao et al. in \cite{ZLZ13}, where one nontrivial solution is founded for the case $p\in (3, 6)$.  In the case of $3<p<4$, the condition $a(x)\geq0$ was also needed in \cite{ZLZ13} and the concentration behavior of the nontrivial solutions for $\lambda\to+\infty$ was also obtained in this case.  Zhao et al.'s results for the case $4<p<6$ was generalized by Ye and Tang in \cite{YT14} in the sense that a more general nonlinearity $f(x,u)$ is considered.  Ye and Tang also obtained the existence and multiplicity of nontrivial solutions for the System~$(\mathcal{SP}_{\lambda})$ if the nonlinearity $f(x,u)$ is sub-linear.  In the very recent work \cite{SW141}, the first and second authors of the current paper study the System~$(\mathcal{SP}_{\lambda})$ with $a(x)\geq0$ when $f(x,u)$ is asymptotically linear, 3-asymptotically linear and 4-asymptotically linear, where some existence and nonexistence results for nontrivial solutions were obtained and the concentration behavior of the nontrivial solutions for $\lambda\to+\infty$ was also observed.  These results partially complement the study on the System~$(\mathcal{SP}_{\lambda})$.

We point out that the case $3<p<4$ for the Schr\"odinger-Poisson System~$(\mathcal{SP}_{\lambda})$ seems to be more interesting and difficult than other cases under the point of calculus of variations, since it is hard to obtain the boundness of every (PS) sequence in this case.  Due to this reason, the Schr\"odinger-Poisson System~$(\mathcal{SP}_{\lambda})$ in the case of $3<p<4$ are less-studied by the variational methods.  In \cite{ZZ08}, some Pohozaev type conditions on the potentials were introduced to get a special bounded (PS) sequence of the Schr\"odinger-Poisson System~$(\mathcal{SP}_{\lambda})$ in the case $3<p<4$ by the monotonicity trick of Jeanjean \cite{J99}.  Similar Pohozaev type conditions on the potentials were also used in \cite{LWZ14,YD10,ZLZ13} for this case.  Inspired by the above facts, we wonder whether the Pohozaev type conditions on the potentials are necessary for finding nontrivial solutions of $(\mathcal{SP}_{\lambda})$ in the case of $3<p<4$?  In the current paper, we will explore this question.

Let us first list out our assumptions on $b(x)$, $K(x)$ and $f(x,t)$:
\begin{enumerate}
\item[$(B_1)$] $b(x)\in C(\bbr^3, \bbr)$.
\item[$(B_2)$] There exists $b_0>0$ such that $b^-(x)\leq b_0(1+\sqrt{a(x)})$ for all $x\in\bbr^3$, where $b^-(x)=\max\{0, -b(x)\}$.
\item[$(B_3)$] Either $b(x)\geq0$ or $b^\pm(x)\not\equiv0$ on $\Omega_a$ with $|\mathcal{B}_0|<+\infty$, where $b^+(x)=\max\{b(x), 0\}$ and $\mathcal{B}_0=\{x\in\bbr^3\mid b(x)<0\}$.
\item[$(K_1)$] $K(x)\in L^2(\bbr^3)$.
\item[$(K_2)$] $\Omega_K=$int$K^{-1}(0)$ is a nonempty open set and dist$(\Omega_a,\bbr^3\backslash\Omega_K)>0$.
\item[$(F_1)$] There exist $p\in(2, 6)$, $f_0>0$ and $f_1>0$ such that $-f_1(|t|^2+|t|^p)\leq f(x,t)t\leq f_0(|t|^2+|t|^{p})$ for all $x\in\bbr^3$ and $t\in\bbr$.
\item[$(F_2)$] $\lim_{t\to0}\frac{f(x,t)}{t}=0$ uniformly for $x\in\bbr^3$.
\item[$(F_3)$] $\lim_{t\to\infty}\frac{f(x,t)}{t^2}=+\infty$ uniformly for $x\in\bbr^3$.
\item[$(F_4)$] There exists $f_2>0$ such that $f(x,t)t-2F(x,t)\geq f_2|t|^{3}$ for all $x\in\bbr^3$, where $F(x,t)=\int_0^tf(x,s)ds$.
\item[$(F_5)$] There exist $f_3>0$ and $\mu>2$ such that $f(x,t)t-\mu F(x,t)\geq-f_3|t|^3$ for all $x\in\bbr^3$ and $t\in\bbr$.
\end{enumerate}
\begin{remark}
\begin{enumerate}
\item[$(1)$] If $b(x)$ is bounded below, then the condition $(B_2)$ is trivial.  But under the assumptions $(B_1)$-$(B_3)$, $b(x)$ may be unbounded from below and sign-changing.
\item[$(2)$] A typical function satisfying the conditions $(F_1)$-$(F_5)$ is that $f(x,t)=|t|^{p-2}t$ with $3<p<6$.  However, $f(x,t)$ may be sign-changing under the conditions $(F_1)$-$(F_5)$.
\end{enumerate}
\end{remark}
Now, the main result in this paper can be stated as follows.
\begin{theorem}\label{thm0001}
Suppose the conditions $(A_1)$-$(A_4)$, $(B_1)$-$(B_3)$, $(K_1)$-$(K_2)$ and $(F_1)$-$(F_5)$ hold.  If $a(x)\geq0$, $3<p\leq4$ and $0\not\in\sigma(-\Delta+b(x), H_0^1(\Omega_a))$, then there exists $\Lambda_*>0$ and $f_1^*>0$ such that $(\mathcal{SP}_{\lambda})$ has a nontrivial weak solution $(u_\lambda,\phi_\lambda)$ for $\lambda\geq\Lambda_*$ and $f_1<f_1^*$, where $\sigma(-\Delta+b(x), H_0^1(\Omega_a))$ is the spectrum of $-\Delta+b(x)$ on $H_0^1(\Omega_a)$.  Furthermore, for every $\{\lambda_n\}$, $(u_{\lambda_n},\phi_{\lambda_n})\to (u_0,\phi_0)$ strongly in $\h\times D^{1,2}(\bbr^3)$ as $n\to\infty$, where $\phi_0(x)=\int_{\Omega_a}\frac{K(y)u_0^2(y)}{4\pi|x-y|}dy$ and $u_0$ is a nontrivial weak solution of the following equation:
\begin{equation}  \label{eq0022}
-\Delta u+b(x)u+K(x)\bigg(\int_{\Omega_a}\frac{K(y)u^2(y)}{4\pi|x-y|}dy\bigg) u=f(x,u),\quad u\in H^1_0(\Omega_a).
\end{equation}
\end{theorem}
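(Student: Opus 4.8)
The plan is to recast $(\mathcal{SP}_\lambda)$ as a single nonlocal equation, to cure the superquadratic/quartic imbalance by a penalization confined to a neighbourhood of $\Omega_a$, to produce a critical point through a linking argument for indefinite functionals, and then to let $\lambda\to+\infty$. First I record the standard facts about the Poisson map $u\mapsto\phi_u(x)=\int_{\bbr^3}\frac{K(y)u^2(y)}{4\pi|x-y|}\,dy\in\D$: it is well defined by $(K_1)$, $\phi_u\geq0$, $\phi_{tu}=t^2\phi_u$, $u\mapsto\phi_u$ is continuous and weakly continuous on $\h$, and $\int_{\bbr^3}K\phi_uu^2=\|\nabla\phi_u\|_2^2\leq C\|K\|_2^2\|u\|_6^4$. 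Thus $(\mathcal{SP}_\lambda)$ is equivalent to $-\Delta u+V_\lambda u+K\phi_uu=f(x,u)$, whose weak solutions are the critical points of $I_\lambda(u)=\frac12\int_{\bbr^3}(|\nabla u|^2+V_\lambda u^2)+\frac14\int_{\bbr^3}K\phi_uu^2-\int_{\bbr^3}F(x,u)$. Since $(K_2)$ forces $K\equiv0$ on some neighbourhood $\mathcal O$ of $\overline{\Omega}_a$, which by $(A_4)$ may be taken with $\overline{\mathcal O}\subset\mathcal A_\infty$, I modify $f$ outside $\mathcal O$ by the method of penalized functions (in the spirit of del Pino and Felmer): set $\widetilde f(x,t)=f(x,t)$ for $x\in\mathcal O$, and for $x\notin\mathcal O$ replace $f(x,t)$ by a truncation that coincides with $f(x,t)$ where $|f(x,t)|\leq\delta|t|$ and equals $\pm\delta|t|$ otherwise ($\delta$ small). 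The penalized functional $J_\lambda$ (with $\widetilde F$ replacing $F$) then carries superquadratic growth only on $\mathcal O$, precisely where $K$ vanishes and $(F_4)$ is available.

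I work in $E_\lambda=\{u\in\h:\int_{\bbr^3}(\lambda a+b^+)u^2<\infty\}$ with $\|u\|_\lambda^2=\int_{\bbr^3}(|\nabla u|^2+(\lambda a+b^++1)u^2)$. By $(A_2)$ one has $E_\lambda\hookrightarrow\h\hookrightarrow L^s(\bbr^3)$ for $s\in[2,6]$ uniformly in $\lambda\geq\lambda_0$, and by $(B_2)$ (with $\lambda a$ dominating $b_0\sqrt{a}$) the quadratic form $u\mapsto\int_{\bbr^3}(|\nabla u|^2+V_\lambda u^2)$ differs from $\|u\|_\lambda^2$ by at most a fraction of $\int_{\bbr^3}\lambda a u^2$ plus a locally compact $L^2$-perturbation. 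Because $b\geq-b_0$ on $\Omega_a$ with $b<0$ only on a finite-measure subset there ($(B_2)$, $(B_3)$), and because for $\lambda$ large $\{V_\lambda<0\}\subset\{a<a_\infty\}$ shrinks toward $\overline{\Omega}_a$ with $V_\lambda^-$ uniformly bounded, the operator $-\Delta+V_\lambda$ has a uniformly bounded collection of negative eigenvalues for $\lambda\geq\lambda_0$; I split $E_\lambda=E_\lambda^-\oplus E_\lambda^+$ accordingly, using $0\notin\sigma(-\Delta+b,H^1_0(\Omega_a))$ to rule out a kernel and to secure a uniform spectral gap, and I observe that elements of $E_\lambda^-$ concentrate near $\overline{\Omega}_a$ as $\lambda\to+\infty$, so that their contribution to the Poisson term is an $o_\lambda(1)\|u\|_\lambda^4$. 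With $f_1<f_1^*$ (chosen below the gap), I verify the linking geometry of $J_\lambda$: $J_\lambda\geq\alpha>0$ on a small sphere of $E_\lambda^+$, and $J_\lambda\leq0$ on the boundary of $Q=\{v+tw:v\in E_\lambda^-,\ 0\leq t\leq R_0,\ \|v\|_\lambda\leq R_1\}$, where $w\in E_\lambda^+$ is obtained from a fixed function in $H^1_0(\Omega_a)$. The point is that along the ray $\bbr w$ the Poisson term stays $O(1)$ (since $K\equiv0$ on $\mathcal O\supset\Omega_a$), while $(F_3)$ drives $J_\lambda\to-\infty$ there, and on $E_\lambda^-$ the quartic term is a negligible perturbation of a negative quadratic form once $\lambda\geq\Lambda_*$. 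A linking theorem for (strongly) indefinite functionals then yields a Cerami sequence at a level $c_\lambda$ with $\alpha\leq c_\lambda\leq C$ uniformly in $\lambda\geq\Lambda_*$.

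I expect the boundedness of these Cerami sequences in the range $3<p\leq4$ to be the main obstacle, since $\frac14\int_{\bbr^3}K\phi_uu^2\sim\|u\|^4$ is not dominated by $\int_{\bbr^3}F\sim\|u\|^p$; this is exactly what the penalization repairs. For a Cerami sequence $\{u_n\}$, combining $J_\lambda(u_n)-\frac12\langle J_\lambda'(u_n),u_n\rangle$ with $(F_4)$ bounds $\int_{\mathcal O}|u_n|^3$ (on $\mathcal O$ the Poisson term is absent), while outside $\mathcal O$ the subquadratic $\widetilde f$ together with the steep-well estimate $\int_{\bbr^3\setminus\mathcal O}u_n^2\leq C\lambda^{-1}\|u_n\|_\lambda^2$ renders that part of $J_\lambda$ coercive; feeding this back through $\langle J_\lambda'(u_n),u_n\rangle=o(\|u_n\|_\lambda)$ and invoking $(F_5)$ gives $\|u_n\|_\lambda\leq C$ for $\lambda\geq\Lambda_*$, $f_1<f_1^*$. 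The Cerami condition for $J_\lambda$ then follows from the local compactness of $E_\lambda\hookrightarrow L^s(\bbr^3)$ plus the tail estimate, by the usual splitting (using the weak continuity of $u\mapsto\phi_u$). Hence $J_\lambda$ has a nontrivial critical point $u_\lambda$ with $J_\lambda(u_\lambda)=c_\lambda\in[\alpha,C]$.

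Finally I run the concentration analysis, which also completes the existence statement. Given $\lambda_n\to+\infty$, the bound $\|u_{\lambda_n}\|_{\lambda_n}\leq C$ yields, along a subsequence, $u_{\lambda_n}\rightharpoonup u_0$ in $\h$; from $\int_{\bbr^3}a u_{\lambda_n}^2\leq C\lambda_n^{-1}\to0$ and $(A_3)$ (so $a>0$ off $\overline{\Omega}_a$, $a\geq0$ by hypothesis) one gets $u_0\in H^1_0(\Omega_a)$, and testing $J_{\lambda_n}'(u_{\lambda_n})=0$ against $\varphi\in C_c^\infty(\Omega_a)$ (where $V_{\lambda_n}=b$, $K\equiv0$, $\widetilde f=f$, and $u_{\lambda_n}\to u_0$ in $L^s_{\mathrm{loc}}$) shows $u_0$ solves \eqref{eq0022} (in which the nonlocal term vanishes because $Ku_0^2\equiv0$, hence $\phi_0\equiv0$). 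A weak-lower-semicontinuity comparison of the energy identities for $u_{\lambda_n}$ and for $u_0$ then forces $\int_{\bbr^3}\lambda_n a u_{\lambda_n}^2\to0$, $u_{\lambda_n}\to u_0$ strongly in $\h$, and $\phi_{u_{\lambda_n}}\to\phi_0=0$ in $\D$ (because $Ku_{\lambda_n}^2\to Ku_0^2=0$ in $L^{6/5}(\bbr^3)$). Strong convergence makes $u_{\lambda_n}$ small in $L^s(\bbr^3\setminus\mathcal O)$, whence a Moser--De Giorgi iteration on the equation over $\bbr^3\setminus\mathcal O$ gives $\|u_{\lambda_n}\|_{L^\infty(\bbr^3\setminus\mathcal O)}\to0$, so by $(F_2)$ the penalization is inactive for $\lambda$ large; consequently $(u_\lambda,\phi_{u_\lambda})$ solves $(\mathcal{SP}_\lambda)$ for every $\lambda\geq\Lambda_*$ (after enlarging $\Lambda_*$ if necessary) and is nontrivial since $J_\lambda(u_\lambda)\geq\alpha>0$. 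Lastly $u_0\neq0$, for otherwise strong convergence would force $c_{\lambda_n}=J_{\lambda_n}(u_{\lambda_n})\to0$, contradicting $c_{\lambda_n}\geq\alpha$.
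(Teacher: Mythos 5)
Your overall architecture (truncate near/away from $\Omega_a$, link in a decomposition of $E_\lambda$ dictated by the finitely many nonpositive directions of $\mathcal{D}_\lambda$, pass to the limit $\lambda\to+\infty$, then remove the truncation by an $L^\infty$ estimate via Moser iteration) matches the paper. But you penalize the wrong term, and this breaks the one step that is the whole point of the range $3<p\leq 4$: the boundedness of the Cerami sequences. In your scheme the nonlocal term $\frac14\int_{\bbr^3}K\phi_u u^2\,dx$ is left untouched, and it is genuinely quartic: $\int_{\bbr^3}K\phi_u u^2\,dx=\|\nabla\phi_u\|_{L^2}^2\leq S^{-2}\|K\|_{L^2}^2\|u\|_{L^6}^4$, with no way to replace $\|u\|_{L^6}^4$ by anything quadratic or cubic. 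Now test your identities: $J_\lambda(u_n)-\tfrac12\langle J_\lambda'(u_n),u_n\rangle$ produces $-\tfrac14\int K\phi_{u_n}u_n^2+\tfrac12\int(\widetilde f u_n-2\widetilde F)$, so the good term $\tfrac{f_2}{2}\int_{\mathcal O}|u_n|^3$ coming from $(F_4)$ must absorb a negative term of size $\|u_n\|_{L^6}^4$; and $J_\lambda(u_n)-\tfrac1\mu\langle J_\lambda'(u_n),u_n\rangle$ carries the coefficient $(\tfrac14-\tfrac1\mu)<0$ in front of $\int K\phi_{u_n}u_n^2$ because $(F_5)$ forces $\mu\leq p<4$ (for $f=|t|^{p-2}t$ one needs $\mu\leq p$), so again a negative quartic term appears that the quadratic term $(\tfrac12-\tfrac1\mu)\|u_n\|_\lambda^2$ cannot dominate. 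Truncating $f$ outside $\mathcal O$ does nothing here, since $K$ vanishes on $\mathcal O$ anyway and the quartic term lives exactly where your truncation of $f$ is active but the Poisson coupling is not modified. Your auxiliary claim that the steep well makes this part harmless also fails as stated: the estimate $\int_{\bbr^3\setminus\mathcal O}u_n^2\leq C\lambda^{-1}\|u_n\|_\lambda^2$ is false in general, because $\mathcal A_\infty\setminus\mathcal O$ (where $a$ may be small and $K$ may be nonzero) need not be empty; only outside $\mathcal A_\infty$ does one gain the factor $\lambda^{-1}$, and even a genuine $L^2$ gain of order $\lambda^{-1}$ times $\|u_n\|_\lambda^2$ cannot control a quartic quantity along a sequence whose norm is not yet known to be bounded.

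The paper's fix is precisely the opposite penalization: it replaces the coupling $2K(x)t$ by $g_{\delta}(x,t)=2K(x)\bigl(\chi_{\Omega_a'}t+(1-\chi_{\Omega_a'})g_\delta(t)\bigr)$, i.e. it truncates the \emph{Poisson nonlinearity} outside $\Omega_a'$ while keeping $f$ intact. Since $K\equiv0$ on $\Omega_a'\subset\Omega_K$, this makes $|G_{\delta}(x,u)|\leq 2\delta K(x)|u|$ everywhere, so the nonlocal term becomes effectively quadratic, $\int\phi_{u,\delta}G_{\delta}(x,u)\,dx\leq 4\delta^2 S^{-1}\|K\|_{L^2}^2\|u\|_{L^3}^2$, and in the identity $\mathcal E_{\lambda,\delta_0}-\tfrac12\langle\mathcal E_{\lambda,\delta_0}',\cdot\rangle$ it is absorbed by the $(F_4)$ term $\tfrac{f_2}{2}\|u_n\|_{L^3}^3$ (this is where the smallness $\delta_0^2<\tfrac12 S^{-1}\|K\|_{L^2}^{-2}f_2$ enters), yielding first an $L^3$ bound and then, via $(F_5)$ and the well, the $\|\cdot\|_\lambda$ bound of Lemma~\ref{lem0005}. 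The removal of the truncation is then an $L^\infty$ bound on $u_\lambda$ outside $\Omega_a'$ (Lemmas~\ref{lem0006}--\ref{lem0007}), exactly parallel to the last step you sketch, but for the Poisson truncation rather than for $f$. Unless you also truncate the nonlocal coupling (or impose Pohozaev-type conditions, which the paper is explicitly avoiding), your boundedness argument does not go through for $3<p<4$, so the proposal has a genuine gap at its central step; the rest of your outline (spectral decomposition, linking, concentration, identification of the limit equation with $\phi_0\equiv0$) is consistent with the paper.
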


\begin{remark}
In Theorem~\ref{thm0001}, the conditions $(A_4)$ and $(K_2)$ are introduced to replace the Pohozaev type conditions on the potentials, which seems to be new for the existence of nontrivial weak solutions to the Schr\"odinger-Poisson system $(\mathcal{SP}_{\lambda})$ with the steep potential well in the case of $3<p\leq4$.  Moreover, under the conditions $(A_1)$--$(A_4)$ and $(B_1)$--$(B_3)$, the Schr\"odinger operator $-\Delta+V_\lambda(x)$ may be strong indefinite if $\lambda$ sufficiently large (see Lemma~\ref{lem0002} for more details), and it seems to be the first time that such a Schr\"odinger operator is considered for the Schr\"odinger-Poisson system~ $(\mathcal{SP}_{\lambda})$ in the case of $3<p\leq4$.
\end{remark}

The remaining of this paper is devoted to the proof of Theorem~\ref{thm0001} and it will be organized as follows.  In section~2, we will introduce an auxiliary system of the Schr\"odinger-Poisson system $(\mathcal{SP}_{\lambda})$ by the method of penalized functions and obtain one nontrivial solution of the auxiliary system by the well-known linking theorem.  In section~3, by the Morse iteration, we will prove that some special nontrivial solutions of the auxiliary system are also the nontrivial solutions of the Schr\"odinger-Poisson system $(\mathcal{SP}_{\lambda})$ for $\lambda$ sufficiently large.  Theorem~\ref{thm0001} will also be shown in this section.

In this paper, we will always denote the usual norms in $\h$ and $L^p(\bbr^3)$ ($p\geq1$) by $\|\cdot\|$ and $\|\cdot\|_{L^p(\bbr^3)}$, respectively.  $o_n(1)$ will always denote the quantities tending towards zero as $n\to\infty$.

\section{The auxiliary system}
In this section, we will introduce an auxiliary system of $(\mathcal{SP}_{\lambda})$ by the method of penalized functions.  This method was developed by del Pino and Felmer in \cite{DF96} and also used to find nontrivial solutions of elliptic equations or systems with the steep potential well in several other literatures, see for example \cite{BT13,DT03,GT121,ST09} and the references therein.  For the sake of convenience, we always assume the conditions $(A_1)$--$(A_4)$, $(B_1)$--$(B_3)$, $(K_1)$--$(K_2)$ and $(F_1)$--$(F_5)$ hold with $a(x)\geq0$ in this section.

Since the conditions $(A_4)$ and $(K_2)$ hold, we can choose $\Omega_a'$ to be a nonempty open set in $\bbr^3$ which satisfies $\Omega_a\subset\Omega_a'\subset(\mathcal{A}_\infty\cap\Omega_K)$, dist$(\Omega_a, \bbr^3\backslash\Omega_a')>0$ and dist$(\Omega_a', \bbr^3\backslash(\mathcal{A}_\infty\cap\Omega_K))>0$.  For $\delta\in(0, \min\{1,\frac12(S^{-1}\|K\|_{L^2(\bbr^3)}^{-2}f_2)^{\frac12}\})$, we define
\begin{equation*}
g_\delta(s)=\left\{\aligned &-\delta,&\quad s\leq-\delta,\\
&s,&\quad |s|\leq\delta,\\
&\delta,&\quad s\geq\delta,\endaligned\right.
\end{equation*}
and $g_\delta(x,t)=2K(x)(\chi_{\Omega_a'}t+(1-\chi_{\Omega_a'})g_\delta(t))$, where $\chi_{\Omega_a'}$ is the characteristic function of $\Omega_a'$.
Let us consider the following elliptic system
$$
\left\{\aligned&-\Delta u+V_\lambda(x)u+\frac12\phi g_\delta(x,u)=f(x,u)&\quad\text{in }\bbr^3,\\
&-\Delta \phi=G_\delta(x,u)&\quad\text{in }\bbr^3,\\
&(u,\phi)\in\h\times\D,\endaligned\right.\eqno{(\overline{\mathcal{SP}}_{\lambda,\delta})}
$$
where $G_\delta(x,u)=\int_0^ug_\delta(x,t)dt$.  Clearly, if $(u,\phi)$ is a weak solution of $(\overline{\mathcal{SP}}_{\lambda,\delta})$ satisfying $|u(x)|\leq\delta$ a.e. on $\bbr^3\backslash\Omega_a'$, then $(u,\phi)$ is also a weak solution of $(\mathcal{SP}_\lambda)$.  On the other hand, for every $u\in \h$, since $|G_\delta(x,u)|\leq K(x)u^2$ for all $x\in\bbr^3$ and $u\in\bbr$, by the condition $(K_1)$, we have
\begin{eqnarray}   \label{eq0002}
\bigg(\int_{\bbr^3}|G_\delta(x,u)|^{\frac65}dx\bigg)^{\frac56}\leq\bigg(\int_{\bbr^3}K(x)^{\frac65}u^{\frac{12}{5}}dx\bigg)^{\frac56}
\leq\|K\|_{L^2(\bbr^3)}\|u\|_{L^{6}(\bbr^3)}^2.
\end{eqnarray}
It follows from the Sobolev inequality that $G_\delta(x,u)\in L^{\frac65}(\bbr^3)$.  By \cite[Lemma~2.1]{B02}, the Poisson equation $-\Delta \phi=G_\delta(x,u)$ in $\bbr^3$ has a unique solution in $\D$, which is given by
\begin{equation*}
\phi_{u,\delta}(x)=\frac{1}{4\pi}\int_{\bbr^3}\frac{G_\delta(y,u(y))}{|x-y|}dy.
\end{equation*}
Thus, the elliptic system $(\overline{\mathcal{SP}}_{\lambda,\delta})$ can be deduced to the following single Schr\"odinger equation
$$
\left\{\aligned&-\Delta u+V_\lambda(x)u+\frac12\phi_{u,\delta} g_\delta(x,u)=f(x,u)&\quad\text{in }\bbr^3,\\
&u\in\h,\endaligned\right.\eqno{(\overline{\mathcal{S}}_{\lambda,\delta})}
$$
in the sense that $u\in\h$ is a solution of $(\overline{\mathcal{S}}_{\lambda,\delta})$ is equivalent to $(u, \phi_{u,\delta})\in\h\times\D$ is a solution of $(\overline{\mathcal{SP}}_{\lambda,\delta})$.

\subsection{Variational setting of $(\overline{\mathcal{S}}_{\lambda,\delta})$}
For every $\lambda>0$, we define $E_\lambda=\{u\in \D\mid \int_{\bbr^3}(\lambda a(x)+b^+(x))u^2dx<+\infty\}$, where $b^+(x)=\max\{b(x), 0\}$.  Since the conditions $(A_1)$ and $(B_1)$ hold, $E_\lambda$ is a Hilbert space with the following inner product
\begin{equation*}
\langle u,v \rangle_\lambda=\int_{\bbr^3}(\nabla u\nabla v+(\lambda a(x)+b^+(x))uv)dx.
\end{equation*}
The corresponding norm is given by $\|u\|_\lambda=\langle u,u \rangle_\lambda^{\frac12}$.  By the condition $(A_2)$ and the H\"older and Sobolev inequalities, for every $u\in E_\lambda$ and $\lambda\geq1$, we have
\begin{equation}\label{eq0023}
\int_{\bbr^3}u^2dx\leq\max\{|\mathcal{A}_\infty|^{\frac23}S^{-1}, a_\infty^{-1}\}\int_{\bbr^3}|\nabla u|^2+(\lambda a(x)+b^+(x))u^2dx.
\end{equation}
Hence, $E_\lambda$ is embedded continuously into $\h$ for $\lambda\geq1$.
\begin{lemma}\label{lem0001}
$(\overline{\mathcal{S}}_{\lambda,\delta})$ has a variational structure in $E_\lambda$ for every $\lambda\geq1$ and the corresponding functional is given by
\begin{equation*}
\mathcal{E}_{\lambda,\delta}(u)=\frac12\mathcal{D}_{\lambda}(u,u)+\frac14\int_{\bbr^3}\phi_{u,\delta}G_\delta(x,u)dx-\int_{\bbr^3}F(x,u)dx,
\end{equation*}
where $\mathcal{D}_{\lambda}(u,v)=\int_{\bbr^3}(\nabla u\nabla v+(\lambda a(x)+b(x))uv)dx$ and $F(x,u)=\int_0^uf(x,s)ds$.
\end{lemma}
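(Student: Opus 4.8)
The plan is to verify three things, in this order: that $\mathcal{E}_{\lambda,\delta}$ is finite on $E_\lambda$ for $\lambda\geq1$; that $\mathcal{E}_{\lambda,\delta}\in C^1(E_\lambda,\bbr)$; and that its critical points are exactly the weak solutions of $(\overline{\mathcal{S}}_{\lambda,\delta})$. For finiteness I would split $\mathcal{E}_{\lambda,\delta}$ into its three pieces. Writing $b=b^+-b^-$ gives $\mathcal{D}_\lambda(u,u)=\|u\|_\lambda^2-\int_{\bbr^3}b^-(x)u^2\,dx$; since $a(x)\geq0$, the condition $(B_2)$ and the Cauchy--Schwarz inequality give $\int_{\bbr^3}b^-(x)u^2\,dx\leq b_0\int_{\bbr^3}u^2\,dx+b_0(\int_{\bbr^3}a(x)u^2\,dx)^{1/2}(\int_{\bbr^3}u^2\,dx)^{1/2}$, and both integrals on the right are finite for $u\in E_\lambda$ by the definition of $E_\lambda$ and by \eqref{eq0023}; the same estimate shows $\mathcal{D}_\lambda$ is a bounded symmetric bilinear form on $E_\lambda\times E_\lambda$. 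For the Coulomb term, \eqref{eq0002} together with $(K_1)$ gives $G_\delta(\cdot,u)\in L^{6/5}(\bbr^3)$, so testing $-\Delta\phi_{u,\delta}=G_\delta(x,u)$ with $\phi_{u,\delta}$ and using the Sobolev inequality yields $0\leq\int_{\bbr^3}\phi_{u,\delta}G_\delta(x,u)\,dx=\|\nabla\phi_{u,\delta}\|_{L^2(\bbr^3)}^2\leq S^{-1}\|K\|_{L^2(\bbr^3)}^2\|u\|_{L^6(\bbr^3)}^4$, which is finite and, via the continuous embedding $E_\lambda\hookrightarrow\h$ for $\lambda\geq1$, controlled by $\|u\|_\lambda^4$. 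Finally $(F_1)$ forces $|f(x,t)|\leq C(|t|+|t|^{p-1})$ and hence $|F(x,t)|\leq C(|t|^2+|t|^p)$ with $2<p<6$, so $\int_{\bbr^3}F(x,u)\,dx$ is finite by the Sobolev embeddings.

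For the $C^1$ property, the quadratic part $u\mapsto\frac12\mathcal{D}_\lambda(u,u)$ is $C^1$ with derivative $v\mapsto\mathcal{D}_\lambda(u,v)$ because $\mathcal{D}_\lambda$ is a bounded bilinear form. The term $u\mapsto\int_{\bbr^3}F(x,u)\,dx$ is a superposition functional of subcritical growth, so by the standard theory of Nemytskii operators it is $C^1$ with derivative $v\mapsto\int_{\bbr^3}f(x,u)v\,dx$. The delicate term is $\Phi(u):=\frac14\int_{\bbr^3}\phi_{u,\delta}G_\delta(x,u)\,dx=\frac{1}{16\pi}\iint_{\bbr^3\times\bbr^3}\frac{G_\delta(x,u(x))\,G_\delta(y,u(y))}{|x-y|}\,dx\,dy$: this is a symmetric quadratic-type expression in $G_\delta(\cdot,u)$, and using $\partial_tG_\delta(x,t)=g_\delta(x,t)$ together with the symmetry of the Riesz kernel one differentiates under the integral to obtain the Gâteaux derivative $\Phi'(u)v=\frac12\int_{\bbr^3}\phi_{u,\delta}g_\delta(x,u)v\,dx$; its continuity in $u$ is then checked through \eqref{eq0002}, $(K_1)$, the pointwise bound $|g_\delta(x,t)|\leq2K(x)(|t|\chi_{\Omega_a'}+\delta)$, and Hölder's and the Sobolev inequalities. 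I expect this last verification — the differentiability, and above all the continuity of the derivative, of the nonlocal Coulomb functional on the weighted space $E_\lambda$ — to be the heaviest technical step, although it is by now a fairly routine computation in the Schrödinger--Poisson literature.

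Adding the three derivatives, $u\in E_\lambda$ is a critical point of $\mathcal{E}_{\lambda,\delta}$ precisely when $\mathcal{D}_\lambda(u,v)+\frac12\int_{\bbr^3}\phi_{u,\delta}g_\delta(x,u)v\,dx-\int_{\bbr^3}f(x,u)v\,dx=0$ for every $v\in E_\lambda$, which is exactly the weak formulation of $(\overline{\mathcal{S}}_{\lambda,\delta})$; since $C_c^\infty(\bbr^3)\subset E_\lambda\subset\h$, such a $u$ is a weak solution in the usual distributional sense and, as recalled before the statement, $(u,\phi_{u,\delta})$ is a weak solution of $(\overline{\mathcal{SP}}_{\lambda,\delta})$. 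This establishes the claimed variational structure; the only genuinely new ingredient beyond standard Sobolev theory is the use of $(B_2)$ and $a(x)\geq0$ to make $\mathcal{D}_\lambda$ finite and bounded on $E_\lambda$ even when $b$ is unbounded from below.
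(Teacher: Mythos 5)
Your proposal is correct and follows essentially the same route as the paper: well-definedness of $\frac12\mathcal{D}_\lambda(u,u)$ via $(B_2)$ and the embedding \eqref{eq0023}, of the nonlocal term via the $L^{6/5}$ bound \eqref{eq0002} and the Sobolev inequality, and of $\int_{\bbr^3}F(x,u)dx$ via $(F_1)$, followed by the $C^1$ property with $\langle\Psi_\delta'(u),v\rangle=2\int_{\bbr^3}\phi_{u,\delta}g_\delta(x,u)v\,dx$ and the identification of critical points with weak solutions. Your minor variations (Cauchy--Schwarz instead of splitting over $\mathcal{A}_\infty$ and its complement, and differentiating the symmetric double-integral representation instead of introducing the auxiliary solution $\phi^0_{u,v,\delta}$ and invoking Fubini) are only cosmetic differences from the paper's argument.
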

\begin{proof}
The idea of this proof comes from \cite{R06}.  We first claim that $\mathcal{E}_{\lambda,\delta}(u)$ is well defined on $E_\lambda$ for every $\lambda\geq1$.  Indeed, for every $u\in E_\lambda$ with $\lambda\geq1$, $\mathcal{D}_{\lambda}(u,u)=\|u\|_\lambda^2+\int_{\bbr^3}b^-(x)u^2dx$.  By the condition $(B_2)$ and \eqref{eq0023}, we have
\begin{eqnarray}
\int_{\bbr^3}b^-(x)u^2dx&\leq& b_0\int_{\bbr^3}(1+\sqrt{a(x)})u^2dx\notag\\
&\leq&b_0(1+\sqrt{a_\infty})\int_{\mathcal{A}_\infty}u^2dx
+b_0(\frac{1}{a_\infty}+\frac{1}{\sqrt{a_\infty}})\int_{\bbr^3\backslash\mathcal{A}_\infty}a(x)u^2dx\notag\\
&\leq&b_*\int_{\bbr^3}|\nabla u|^2+V_\lambda(x)u^2dx,\label{eq0024}
\end{eqnarray}
where $b_*=\max\{b_0(1+\sqrt{a_\infty})d_*, b_0(\frac{1}{a_\infty}+\frac{1}{\sqrt{a_\infty}})\}$ and $d_*=\max\{|\mathcal{A}_\infty|^{\frac23}S^{-1}, a_\infty^{-1}\}$.  It follows that $\mathcal{D}_{\lambda}(u,u)$ is well defined on  $E_\lambda$ for every $\lambda\geq1$.  On the other hand, since the condition $(F_1)$ holds and $E_\lambda$ is embedded continuously into $\h$ for $\lambda\geq1$, by using the H\"older and Sobolev inequalities in a standard way, we can see that $\int_{\bbr^3}F(x,u)dx$ is also well defined on $E_\lambda$ for every $\lambda\geq1$.  It remains to show $\int_{\bbr^3}\phi_{u,\delta}G_\delta(x,u)dx$ is well defined on $E_\lambda$ for $\lambda\geq1$.  In fact, since $\phi_{u,\delta}$ is the unique solution of the following Poisson equation
\begin{equation*}
-\Delta \phi=G_\delta(x,u),\quad\phi\in\D,
\end{equation*}
we can see that $\|\nabla\phi_{u,\delta}\|_{L^2(\bbr^3)}^2=\int_{\bbr^3}\phi_{u,\delta}G_\delta(x,u)dx$.  Hence, by the H\"older and Sobolev inequalities, we have
\begin{equation*}
\|\nabla\phi_{u,\delta}\|_{L^2(\bbr^3)}\leq S^{-\frac12}\bigg(\int_{\bbr^3}|G(x,u)|^{\frac65}dx\bigg)^{\frac56},
\end{equation*}
where $S$ is the best Sovolev embedding constant and given by
\begin{equation*}
S=\inf_{u\in D^{1,2}(\bbr^3)\backslash\{0\}}\frac{\|\nabla u\|_{L^2(\bbr^3)}^2}{\|u\|_{L^6(\bbr^3)}^2}.
\end{equation*}
This implies that $|\int_{\bbr^3}\phi_{u,\delta}G_\delta(x,u)dx|\leq S^{-1}\bigg(\int_{\bbr^3}|G(x,u)|^{\frac65}dx\bigg)^{\frac53}$.  Now, by \eqref{eq0002}, we can obtain that
\begin{equation}      \label{eq0004}
|\int_{\bbr^3}\phi_{u,\delta}G_\delta(x,u)dx|\leq S^{-2}\|K\|_{L^2(\bbr^3)}^2\|u\|_{L^{6}(\bbr^3)}^4.
\end{equation}
Since $E_\lambda$ is embedded continuously into $\h$ for $\lambda\geq1$, by the Sobolev inequality, we can see that $\int_{\bbr^3}\phi_{u,\delta}G_\delta(x,u)dx$ is also well defined on $E_\lambda$ for $\lambda\geq1$.  We complete the proof by showing that $\mathcal{E}_\lambda(u)$ is the corresponding functional of $(\overline{\mathcal{S}}_{\lambda,\delta})$ in $E_\lambda$ for $\lambda\geq1$.  Indeed, let $\Psi_\delta(u)=\int_{\bbr^3}\phi_{u,\delta}G_\delta(x,u)dx$.  Then $\Psi_\delta(u)$ is well defined on $E_\lambda$ for $\lambda\geq1$.  For every $u,v\in E_\lambda$, since the condition $(K_1)$ holds, by similar arguments as used in \eqref{eq0002}, we can see that $g_\delta(x,u)v\in L^{\frac65}(\bbr^3)$.  It follows from \cite[Lemma~2.1]{B02} that $\phi_{u,v,\delta}^0\in\D$ is the unique solution of the Poisson equation $-\Delta\phi=g_\delta(x,u)v$ in $\bbr^3$, which is given by
\begin{equation*}
\phi_{u,v,\delta}^0(x)=\frac{1}{4\pi}\int_{\bbr^3}\frac{g_\delta(y,u(y))v(y)}{|x-y|}dy.
\end{equation*}
By similar arguments as used in \eqref{eq0004}, we know that $|\int_{\bbr^3}G_\delta(x,u)\phi_{u,v,\delta}^0dx|<+\infty$.  Therefore, thanks to the Fubini theorem, we have $\Psi_\delta(u)\in C^1(\bbr^3, \bbr)$ and
\begin{equation*}
\langle \Psi_\delta'(u),v \rangle_{E_\lambda^*, E_\lambda}=2\int_{\bbr^3}\phi_{u,\delta}g_\delta(x,u)vdx
\end{equation*}
for every $u,v\in E_\lambda$ with $\lambda\geq1$, where $E_\lambda^*$ is the dual space of $E_\lambda$.  Now, by applying the condition $(F_1)$ and the H\"older and Sobolev inequalities in a standard way and applying a similar argument as used in \eqref{eq0024},
we can see that $\mathcal{E}_{\lambda,\delta}(u)$ is $C^1$ on $E_\lambda$ for $\lambda\geq1$.  Furthermore, for every $u,v\in E_\lambda$ with $\lambda\geq1$, we have
\begin{equation*}
\langle \mathcal{E}_{\lambda,\delta}'(u),v \rangle_{E_\lambda^*, E_\lambda}=\mathcal{D}_\lambda(u,v)+\int_{\bbr^3}\frac12\phi_{u,\delta}g_\delta(x,u)vdx-\int_{\bbr^3}f(x,u)vdx.
\end{equation*}
Hence, if $u\in E_\lambda$ is a critical point of $\mathcal{E}_{\lambda,\delta}(u)$, then $u$ is also a solution of $(\overline{\mathcal{S}}_{\lambda,\delta})$ due to the fact that $E_\lambda$ is embedded continuously into $\h$, that is, $\mathcal{E}_{\lambda,\delta}(u)$ is the corresponding functional of $(\overline{\mathcal{S}}_{\lambda,\delta})$ in $E_\lambda$ for $\lambda\geq1$.
\end{proof}

\subsection{Properties of the functional $\mathcal{D}_\lambda(u,u)$}
We have known from Lemma~\ref{lem0001} that $\mathcal{D}_\lambda(u,u)$ is well defined on $E_\lambda$ for $\lambda\geq1$.  By a similar argument as used in \eqref{eq0024}, we can see that $\mathcal{D}_\lambda(u,u)$ is actually $C^2$ on $E_\lambda$ for $\lambda\geq1$.  Furthermore, we also have the following lemma for $\mathcal{D}_\lambda(u,u)$, which is inspired by \cite{BT13} and \cite{ZLZ13}.
\begin{lemma}\label{lem0002}
There exists $\Lambda_0\geq1$ such that the augmented Morse index of $\mathcal{D}_\lambda(u,u)$ is less than or equal to $k_0$ uniformly for $u\in E_\lambda$ with $\lambda\geq\Lambda_0$, where $k_0=$dim$\mathcal{V}_b$ and $\mathcal{V}_b=\{u\in H_0^1(\Omega)\mid\int_{\Omega_a}(|\nabla u|^2+b(x)u^2)dx\leq0\}$.
\end{lemma}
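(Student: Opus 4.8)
Here is how I would prove Lemma~\ref{lem0002}.

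\medskip\noindent\textbf{Proof proposal.}
The plan is to argue by contradiction. If $k_0=\dim\mathcal{V}_b=+\infty$ there is nothing to prove, so assume $k_0<+\infty$. If the conclusion fails there are $\lambda_n\to+\infty$ (we may assume $\lambda_n\geq1$ as large as needed) and subspaces $V_n\subset E_{\lambda_n}$ with $\dim V_n=k_0+1$ on which the quadratic form is nonpositive, i.e. $\mathcal{D}_{\lambda_n}(u,u)\leq0$ for all $u\in V_n$. Since the $L^2(\bbr^3)$-inner product is positive definite on each finite-dimensional $V_n$, I would fix an $L^2(\bbr^3)$-orthonormal basis $\{e_n^1,\dots,e_n^{k_0+1}\}$ of $V_n$, so that $\int_{\bbr^3}e_n^ie_n^j\,dx=\delta_{ij}$.

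First I would produce a uniform bound $\sup_n\|e_n^j\|_{\lambda_n}<+\infty$ for each $j$. Testing the nonpositivity against $e_n^j$ gives $\|e_n^j\|_{\lambda_n}^2\leq\int_{\bbr^3}b^-(x)(e_n^j)^2\,dx$, and splitting the right-hand side over $\mathcal{A}_\infty$ and $\bbr^3\backslash\mathcal{A}_\infty$ exactly as in \eqref{eq0024} — using $(B_2)$, $a(x)<a_\infty$ on $\mathcal{A}_\infty$, $\|e_n^j\|_{L^2(\bbr^3)}=1$, $a\geq0$, and $\int_{\bbr^3\backslash\mathcal{A}_\infty}a(x)(e_n^j)^2\,dx\leq\lambda_n^{-1}\|e_n^j\|_{\lambda_n}^2$ — yields $\|e_n^j\|_{\lambda_n}^2\leq b_0(1+\sqrt{a_\infty})+C\lambda_n^{-1}\|e_n^j\|_{\lambda_n}^2$, which for $\lambda_n$ large gives the claimed bound. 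Since $E_{\lambda_n}\hookrightarrow\h$ continuously and uniformly for $\lambda_n\geq1$ (by \eqref{eq0023}), the $e_n^j$ are bounded in $\h$, so after passing to a subsequence $e_n^j\rightharpoonup e^j$ in $\h$ and $e_n^j\to e^j$ a.e. in $\bbr^3$ for every $j$.

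The heart of the argument is a concentration statement: the limits $e^j$ belong to $H_0^1(\Omega_a)$ and are still $L^2(\bbr^3)$-orthonormal. For concentration: on $\{x:a(x)\geq\varepsilon\}$ one has $(e_n^j)^2\leq(\varepsilon\lambda_n)^{-1}\lambda_n a(x)(e_n^j)^2$, hence $\int_{\{a\geq\varepsilon\}}(e_n^j)^2\,dx\leq(\varepsilon\lambda_n)^{-1}\|e_n^j\|_{\lambda_n}^2\to0$; by a.e. convergence and Fatou, $e^j=0$ a.e. on $\{a\geq\varepsilon\}$ for every $\varepsilon>0$, so $e^j=0$ a.e. on $\{a>0\}$. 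Since $a^{-1}(0)=\overline{\Omega}_a$ by $(A_3)$ and $\partial\Omega_a$ has measure zero, $e^j$ vanishes a.e. outside $\Omega_a$, and because $\Omega_a$ has smooth boundary the restriction $e^j|_{\Omega_a}$ lies in $H_0^1(\Omega_a)$. For the orthonormality: since $|\mathcal{A}_\infty|<+\infty$ by $(A_2)$ and $\{e_n^j\}$ is bounded in $L^6(\bbr^3)$, the family $\{(e_n^j)^2\}_n$ is uniformly integrable over $\mathcal{A}_\infty$ (because $\int_E(e_n^j)^2\,dx\leq|E|^{2/3}\|e_n^j\|_{L^6(\bbr^3)}^2$ for measurable $E\subset\mathcal{A}_\infty$), so by Vitali's theorem $e_n^j\to e^j$ in $L^2(\mathcal{A}_\infty)$; combining this with $\int_{\bbr^3\backslash\mathcal{A}_\infty}(e_n^j)^2\,dx\leq(a_\infty\lambda_n)^{-1}\|e_n^j\|_{\lambda_n}^2\to0$ and with $(A_4)$, which gives $\overline{\Omega}_a\subset\mathcal{A}_\infty$ and hence $\mathrm{supp}\,e^j\subset\mathcal{A}_\infty$, I get $\int_{\bbr^3}e^ie^j\,dx=\lim_n\int_{\bbr^3}e_n^ie_n^j\,dx=\delta_{ij}$. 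Thus $V:=\mathrm{span}\{e^1,\dots,e^{k_0+1}\}$ is a $(k_0+1)$-dimensional subspace of $H_0^1(\Omega_a)$.

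Finally I would show $V\subset\mathcal{V}_b$. Fix $u=\sum_jc_je^j$ and $u_n=\sum_jc_je_n^j\in V_n$, so $u_n\rightharpoonup u$ in $\h$, $u_n\to u$ in $L^2(\mathcal{A}_\infty)$, $\sup_n\|u_n\|_{\lambda_n}<+\infty$, and $\mathcal{D}_{\lambda_n}(u_n,u_n)\leq0$. Writing $\mathcal{D}_{\lambda_n}(u_n,u_n)=\int_{\bbr^3}|\nabla u_n|^2+\int_{\bbr^3}\lambda_n a u_n^2+\int_{\bbr^3}b^+u_n^2-\int_{\mathcal{A}_\infty}b^-u_n^2-\int_{\bbr^3\backslash\mathcal{A}_\infty}b^-u_n^2$ (all terms finite by \eqref{eq0024} and $a\geq0$), I would take $\liminf$: the gradient term is weakly lower semicontinuous, $\int\lambda_n a u_n^2\geq0$, and since $b^+u_n^2\geq0$ Fatou gives $\liminf\int b^+u_n^2\geq\int_{\Omega_a}b^+u^2$ — this is the place where it matters that $b^+$ need not be bounded; meanwhile $b^-$ is bounded on $\mathcal{A}_\infty$ (by $(B_2)$, as $a<a_\infty$ there), so $\int_{\mathcal{A}_\infty}b^-u_n^2\to\int_{\Omega_a}b^-u^2$, and $\int_{\bbr^3\backslash\mathcal{A}_\infty}b^-u_n^2\leq C\lambda_n^{-1}\to0$. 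Altogether $0\geq\liminf\mathcal{D}_{\lambda_n}(u_n,u_n)\geq\int_{\Omega_a}(|\nabla u|^2+b(x)u^2)\,dx$, i.e. $u\in\mathcal{V}_b$. Hence $\mathcal{V}_b$ contains the $(k_0+1)$-dimensional subspace $V$, contradicting $\dim\mathcal{V}_b=k_0$. I expect the two delicate points to be the no-loss-of-mass argument that preserves $L^2$-orthonormality in the limit (which crucially combines $|\mathcal{A}_\infty|<+\infty$, the steep-well decay $\lambda_n^{-1}$, and the Sobolev embedding via uniform integrability) and the careful $\liminf$ bookkeeping for the sign-changing and possibly unbounded potential $b$, where Fatou must replace dominated convergence on the $b^+$ part.
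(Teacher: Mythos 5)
Your proof is correct, but it takes a genuinely different route from the paper's. The paper argues spectrally: it splits $E_\lambda=\mathcal{F}_\lambda\oplus\mathcal{F}_\lambda^{\perp}$ (with $\mathcal{F}_\lambda$ the functions supported in $b^{-1}([0,+\infty))$), uses $(B_3)$ to show that $(-\Delta+\lambda a+b^+)^{-1}b^-$ is compact and self-adjoint on $\mathcal{F}_\lambda^{\perp}$, obtains eigenvalues $\alpha_j(\lambda)$ with a min--max characterization, proves they are nondecreasing in $\lambda$ and converge to eigenvalues of the limiting problem $-\Delta u+b^+u=\alpha b^-u$ in $H_0^1(\Omega_a)$, and concludes that $\alpha_{k_0+1}(\lambda)>1$ for $\lambda$ large, which bounds the augmented Morse index. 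You instead run a direct contradiction/compactness argument: a $(k_0+1)$-dimensional nonpositive subspace for $\lambda_n\to+\infty$ is passed to the limit, and your ingredients are exactly the right ones --- the uniform $\|\cdot\|_{\lambda_n}$-bound from $(B_2)$, $a\geq0$ and the $L^2$-normalization; concentration on $\{a=0\}$ from the steep well; preservation of $L^2$-orthonormality via $|\mathcal{A}_\infty|<+\infty$, uniform integrability from the $L^6$-bound, and the $\lambda_n^{-1}$ decay outside $\mathcal{A}_\infty$; and the Fatou/weak lower semicontinuity bookkeeping for the form, with $b^-$ bounded on $\mathcal{A}_\infty$ and $b^+$ handled by Fatou --- producing a $(k_0+1)$-dimensional subspace of $H_0^1(\Omega_a)$ contained in $\mathcal{V}_b$, which contradicts $\dim\mathcal{V}_b=k_0$ once that quantity is read, as it must be, as the maximal dimension of a linear subspace contained in the cone $\mathcal{V}_b$ (incidentally, $\overline{\Omega}_a\subset\mathcal{A}_\infty$ already follows from $a=0$ on $\overline{\Omega}_a$, so you do not really need $(A_4)$ at that point). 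Your argument is shorter and more elementary: it avoids the compactness of the weighted resolvent, the basis property, and the delicate eigenvalue/eigenfunction convergence of the paper's Step~2, and it treats the cases $b\geq0$ and $b$ sign-changing uniformly. What the paper's heavier spectral route buys is precisely the extra structure used downstream: the eigenfunctions $e_j(\lambda)$ and the convergence $\alpha_j(\lambda)\to\alpha_j^0$ furnish the decomposition $E_\lambda=\mathcal{F}_{\lambda,-}^{\perp}\oplus\mathcal{F}_{\lambda,+}^{\perp}\oplus\mathcal{F}_{\lambda}$ together with the quantitative comparison to $\overline{\alpha}_{l_0},\overline{\alpha}_{l_0+1}$ on which Lemma~\ref{lem0003}, and hence the linking geometry of Lemma~\ref{lem0004}, are built; your proof establishes the stated Morse-index bound but would leave those later estimates to be obtained by the paper's construction or some substitute.
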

\begin{proof}
If $b(x)\geq0$ on $\bbr^3$, then it is easy to see that $\mathcal{V}_b=\{0\}$ and $k_0=0$.  On the other hand, since $b(x)\geq0$ on $\bbr^3$, it is also easy to show that the augmented Morse index of $\mathcal{D}_\lambda(u,u)$ is equal to $0$ uniformly for $u\in E_\lambda$ with $\lambda\geq1$, which then implies that Lemma~\ref{lem0002} holds in this case.  It remains to show that Lemma~\ref{lem0002} is also true if $b(x)$ is sign-changing.  Indeed, since the condition $(B_3)$ holds, we can see that $b^{-1}([0, +\infty))\not=\emptyset$ in this case.  It follows from the condition $(B_1)$ that $\mathcal{F}_\lambda\not=\emptyset$ for every $\lambda\geq1$, where
\begin{equation*}
\mathcal{F}_\lambda=\{u\in E_\lambda\mid \text{supp}u\subset b^{-1}([0, +\infty))\}.
\end{equation*}
Let
\begin{equation*}
\mathcal{F}_\lambda^{\perp}=\{u\in E_\lambda\mid \langle u,v \rangle_\lambda=0\text{ for all } v\in \mathcal{F}_\lambda\}.
\end{equation*}
Then $E_\lambda=\mathcal{F}_\lambda\oplus\mathcal{F}_\lambda^{\perp}$.  Furthermore, by the condition $(B_1)$, we have $\mathcal{F}_\lambda^{\perp}\not=\{0\}$ for every $\lambda\geq1$ since $b(x)$ is sign-changing.  For the sake of clarity, the proof is further performed through the following several steps.

{\bf Step.~1}\quad We prove that for every $\lambda\geq1$, the operator $(-\Delta+\lambda a(x)+b^+(x))^{-1}b^-(x)$ has a sequence of positive eigenvalues $\{\alpha_j(\lambda)\}$ in $\mathcal{F}_\lambda^{\perp}$ satisfying $0<\alpha_1(\lambda)\leq\alpha_2(\lambda)\leq\cdots\leq\alpha_j(\lambda)\to+\infty$ as $j\to+\infty$, and the corresponding eigenfunctions are a basis of $\mathcal{F}_\lambda^{\perp}$.

Indeed, it is easy to see that the operator $(-\Delta+\lambda a(x)+b^+(x))^{-1}b^-(x)$ is linear and self-conjugate on $\mathcal{F}_\lambda^{\perp}$ for all $\lambda\geq1$.  Thanks to the condition $(B_3)$, $(-\Delta+\lambda a(x)+b^+(x))^{-1}b^-(x)$ is also compact on $\mathcal{F}_\lambda^{\perp}$ for all $\lambda\geq1$.  Thus, by \cite[Theorems~4.45 and~4.46]{W95}, the eigenvalue problem $-\Delta u+(\lambda a(x)+b^+(x))u=\alpha b^-(x)u$ has a sequence of positive eigenvalues $\{\alpha_j(\lambda)\}$ in $\mathcal{F}_\lambda^{\perp}$ satisfying $0<\alpha_1(\lambda)\leq\alpha_2(\lambda)\leq\cdots\leq\alpha_j(\lambda)\to+\infty$ as $j\to+\infty$.  Furthermore, $\{\alpha_j(\lambda)\}$ can be characterized by
\begin{equation}   \label{eq0005}
\alpha_j(\lambda)=\inf_{\text{dim}M\geq j, M\subset\mathcal{F}_\lambda^{\perp}}\sup\bigg\{\|u\|_\lambda^2\mid u\in M\text{ and }\int_{\bbr^3}b^-(x)u^2dx=1\bigg\},\quad j=1,2,3,\cdots,
\end{equation}
and the corresponding eigenfunctions $e_{j}(\lambda)$ can be chosen so that $\int_{\bbr^3}b^-(x)e_j^2(\lambda)dx=1$ for all $j\in\bbn$ and are a basis of $\mathcal{F}_\lambda^{\perp}$.

{\bf Step.~2}\quad We prove that $\alpha_j(\lambda)$ is nondecreasing for $\lambda$ and $\alpha_j(\lambda)\to \alpha_j^0$ as $\lambda\to+\infty$, where $\alpha_j^0$ is a positive eigenvalue of the following problem
\begin{equation}  \label{eq0006}
-\Delta u+b^+(x)u=\alpha b^-(x)u,\quad u\in H^1_0(\Omega_a).
\end{equation}

Indeed, let $\lambda_1\geq\lambda_2$, then by the definition of $E_\lambda$, we have $E_{\lambda_1}=E_{\lambda_2}$.  It follows that $\mathcal{F}_{\lambda_2}=\mathcal{F}_{\lambda_1}$, which implies $\mathcal{F}_{\lambda_2}^{\perp}=\mathcal{F}_{\lambda_1}^{\perp}$.  Note that $\|u\|_{\lambda_1}\geq\|u\|_{\lambda_2}$ for all $u\in E_{\lambda_1}$, so by the definition of $\alpha_j(\lambda_1)$ and $\alpha_j(\lambda_2)$, we can see that $\alpha_j(\lambda_2)\leq\alpha_j(\lambda_1)$, that is, $\alpha_j(\lambda)$ is nondecreasing for $\lambda$.  In what follows, we will show that $\alpha_j(\lambda)\to \alpha_j^0$ as $\lambda\to+\infty$.  By the conditions $(B_1)$ and $(B_3)$, for every $j\in\bbn$, there exists $\{\varphi_m\}_{1\leq m\leq j}\subset C_0^\infty(\bbr^3)$ such that supp$\varphi_m\subset\Omega_a\cap b^{-1}((-\infty, 0))$ and supp$\varphi_m\cap$ supp$\varphi_n=\emptyset$ for $m\not=n$.  Let $M_0=$span$\{\varphi_1, \cdots, \varphi_j\}$.  Then by \eqref{eq0005}, $\alpha_j(\lambda)\leq \alpha_j^*$, where
\begin{equation*}
\alpha_j^*=\sup\bigg\{\int_{\Omega_a}(|\nabla u|+b^+(x)u^2)dx\mid u\in M_0\text{ and }\int_{\Omega_a}b^-(x)u^2dx=1\bigg\}.
\end{equation*}
It follows from the choice of $e_j(\lambda)$ that $\|e_j(\lambda)\|_\lambda\leq\sqrt{\alpha_j^*}$.  Note that by \eqref{eq0023}, we have $\|e_j(\lambda)\|^2\leq(1+d_*)\|e_j(\lambda)\|_\lambda^2$, where $d_*>0$ is a constant given by \eqref{eq0024}.  So up to a subsequence, $e_j(\lambda)\rightharpoonup e_j$ weakly in $\h$ as $\lambda\to+\infty$.  By the condition $(A_1)$, $\|e_j(\lambda)\|_\lambda\leq\sqrt{\alpha_j^*}$ once more and the Fatou lemma, we can see that $\int_{\bbr^3}a(x)e_j(\lambda)^2dx\to0$ as $\lambda\to+\infty$ up to a subsequence and $\int_{\bbr^3}a(x)e_j^2dx=0$, which then together with the condition $(A_3)$, implies $e_j\in H_0^1(\Omega_a)$.  Since the condition $(A_2)$ holds, we have $|\mathcal{A}_\infty\cap (\bbr^3\backslash B_R)|\to0$ as $R\to+\infty$, where $B_R=\{x\in\bbr^3\mid|x|<R\}$.  It follows from the Sobolev embedding theorem, $\int_{\bbr^3}a(x)e_j(\lambda)^2dx\to0$ as $\lambda\to+\infty$ and $\int_{\bbr^3}a(x)e_j^2dx=0$ that $e_j(\lambda)\to e_j$ strongly in $L^2(\bbr^3)$ as $\lambda\to+\infty$ up to a subsequence.  Now, by the conditions $(A_2)$ and $(B_2)$, for every $\psi\in C_0^\infty(\Omega_a)$, we can see that
\begin{eqnarray*}
\int_{\Omega_a}\nabla e_j\nabla \psi+b^+(x)e_j\psi dx&=&\lim_{\lambda\to+\infty}\int_{\bbr^3}\nabla e_j(\lambda)\nabla \psi+b^+(x)e_j(\lambda)\psi dx\\
&=&\lim_{\lambda\to+\infty}\int_{\bbr^3}\nabla e_j(\lambda)\nabla \psi_\lambda+b^+(x)e_j(\lambda)\psi_\lambda dx\\
&=&\lim_{\lambda\to+\infty}\alpha_j(\lambda)\int_{\bbr^3}b^-(x)e_j(\lambda)\psi_\lambda dx\\
&=&\lim_{\lambda\to+\infty}\alpha_j(\lambda)\int_{\bbr^3}b^-(x)e_j(\lambda)\psi dx\\
&=&\alpha_j^0\int_{\Omega_a}b^-(x)e_j\psi dx,
\end{eqnarray*}
where $\psi_\lambda$ is the projection of $\psi$ in $\mathcal{F}_\lambda^{\perp}$.  Hence, $(e_j,\alpha_j^0)$ satisfies \eqref{eq0006}.  Note that $\alpha_j(\lambda)$ is nondecreasing for $\lambda$, so by Step.~1, we can see that $\alpha_j^0$ is positive.

Now, since $b(x)\in C(\bbr^3, \bbr)$ and $|\Omega_a|<+\infty$, by a similar argument as used in Step.~1, we can see that the eigenvalue problem \eqref{eq0005} has a sequence of positive eigenvalues $\{\overline{\alpha}_j\}$ satisfying $0<\overline{\alpha}_1\leq\overline{\alpha}_2\leq\cdots\leq\overline{\alpha}_j\to+\infty$ as $j\to+\infty$, and the corresponding eigenfunctions $\overline{e}_j$ are a basis of $H^1_0(\Omega_a)$.  Hence, $k_0=$dim$\mathcal{V}_b<+\infty$.  Suppose there exist $j\not=i$ such that $\alpha_j^0=\alpha_i^0=\overline{\alpha}_k$ for some $k\in\bbn$.  Then one of the following two cases must happen:
\begin{enumerate}
\item[$(1)$] $e_j=e_i$;
\item[$(2)$] $e_j\not=e_i$ and $\int_{\Omega_a}\nabla e_j\nabla e_i+b^+(x)e_je_idx=0$.
\end{enumerate}
If case~$(1)$ happen, then by Step.~1 and Step.~2, we have
\begin{eqnarray*}
2\overline{\alpha}_k=\lim_{\lambda\to+\infty}(\alpha_j(\lambda)+\alpha_i(\lambda))
=\lim_{\lambda\to+\infty}(\|e_j(\lambda)\|_\lambda^2+\|e_i(\lambda)\|_\lambda^2)
=\lim_{\lambda\to+\infty}(\|e_j(\lambda)-e_i(\lambda)\|_\lambda^2)=0.
\end{eqnarray*}
It is impossible.  Therefore, we must have the case~$(2)$.  Now, by Step.~2, we can see that there exists $\Lambda_0\geq1$ such that $\alpha_{k_0+1}(\lambda)>1$ for $\lambda\geq\Lambda_0$.  It follows from Step.~1 that $\alpha_j(\lambda)>1$ for all $j\geq k_0+1$ and $\lambda\geq\Lambda_0$, which implies that the augmented Morse index of $\mathcal{D}_\lambda(u,u)$ is less than or equal to $k_0$ uniformly for $u\in E_\lambda$ with $\lambda\geq\Lambda_0$.
\end{proof}

By Step.~1 of Lemma~\ref{lem0002}, $\mathcal{F}_{\lambda}^{\perp}=\mathcal{F}_{\lambda,-}^{\perp}\oplus\mathcal{F}_{\lambda,+}^{\perp}$ for every $\lambda\geq1$, where $\mathcal{F}_{\lambda,-}^{\perp}=$span$\{e_j(\lambda)\mid\alpha_j(\lambda)\leq1\}$ and $\mathcal{F}_{\lambda,+}^{\perp}=$span$\{e_j(\lambda)\mid\alpha_j(\lambda)>1\}$, which implies $E_\lambda=\mathcal{F}_{\lambda,-}^{\perp}\oplus\mathcal{F}_{\lambda,+}^{\perp}\oplus\mathcal{F}_{\lambda}$ for every $\lambda\geq1$.  Moreover, due to Lemma~\ref{lem0002} again, we know that dim$(\mathcal{F}_{\lambda,-}^{\perp})\leq k_0<+\infty$ for all $\lambda\geq\Lambda_0$ and $k_0$ is independent of $\lambda\geq\Lambda_0$.  Let $l_0=\max\{j\mid\overline{\alpha}_j<1\}$.  Then $l_0\leq k_0$ and $\overline{\alpha}_{l_0+1}>1$ due to the fact that $0\not\in\sigma(-\Delta+b(x), H_0^1(\Omega_a))$.  Now, we have the following important estimates for $\mathcal{D}_\lambda(u,u)$ on $E_\lambda$.
\begin{lemma}\label{lem0003}
Suppose $0\not\in\sigma_p(-\Delta+b(x), H^1_0(\Omega_a))$, then there exists $\Lambda_1\geq\Lambda_0$ such that for every $u\in E_\lambda$ with $\lambda\geq\Lambda_1$, $\mathcal{D}_\lambda(u,u)\leq-\frac12(1-\frac{1}{\overline{\alpha}_{l_0}})\|u\|_\lambda^2$ on $\mathcal{F}_{\lambda,-}^{\perp}$ and $\mathcal{D}_\lambda(u,u)\geq\frac12(1-\frac{1}{\overline{\alpha}_{l_0+1}})\|u\|_\lambda^2$ on $\mathcal{F}_{\lambda,+}^{\perp}\oplus\mathcal{F}_{\lambda}$.
\end{lemma}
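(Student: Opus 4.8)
The plan is to diagonalize the quadratic form $\mathcal{D}_\lambda(u,u)$ against the eigenbasis $\{e_j(\lambda)\}$ produced in Lemma~\ref{lem0002} and to read off the two estimates by a term‑by‑term comparison, exploiting the monotonicity in $\lambda$ and the limit $\alpha_j(\lambda)\to\overline{\alpha}_j$ recorded there. (If $b\geq0$ then $b^-\equiv0$, so $\mathcal{F}_\lambda^{\perp}=\{0\}$, $E_\lambda=\mathcal{F}_\lambda$ and $\mathcal{D}_\lambda(u,u)=\|u\|_\lambda^2$ on $E_\lambda$, and both estimates are trivial; so one may assume $b$ is sign‑changing.) Since $b(x)=b^+(x)-b^-(x)$, for every $u\in E_\lambda$ one has $\mathcal{D}_\lambda(u,u)=\|u\|_\lambda^2-\int_{\bbr^3}b^-(x)u^2\,dx$. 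By Step~1 of Lemma~\ref{lem0002} the eigenfunctions may be taken with $\int_{\bbr^3}b^-(x)e_i(\lambda)e_j(\lambda)\,dx=\delta_{ij}$, and testing the eigenvalue equation for $e_i(\lambda)$ against $e_j(\lambda)$ gives $\langle e_i(\lambda),e_j(\lambda)\rangle_\lambda=\alpha_j(\lambda)\delta_{ij}$, hence $\mathcal{D}_\lambda(e_i(\lambda),e_j(\lambda))=(\alpha_j(\lambda)-1)\delta_{ij}$. Thus for $u=\sum_jc_je_j(\lambda)\in\mathcal{F}_\lambda^{\perp}$ we get $\|u\|_\lambda^2=\sum_jc_j^2\alpha_j(\lambda)$ and $\mathcal{D}_\lambda(u,u)=\sum_jc_j^2(\alpha_j(\lambda)-1)$. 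Moreover every element of $\mathcal{F}_\lambda$ is supported in $b^{-1}([0,+\infty))$, so $\mathcal{D}_\lambda(u,u)=\|u\|_\lambda^2$ on $\mathcal{F}_\lambda$, while for $u_1\in\mathcal{F}_\lambda^{\perp}$ and $u_2\in\mathcal{F}_\lambda$ one has $\mathcal{D}_\lambda(u_1,u_2)=\langle u_1,u_2\rangle_\lambda-\int_{\bbr^3}b^-(x)u_1u_2\,dx=0$. Consequently the splitting $E_\lambda=\mathcal{F}_{\lambda,-}^{\perp}\oplus\mathcal{F}_{\lambda,+}^{\perp}\oplus\mathcal{F}_\lambda$ is orthogonal for both $\langle\cdot,\cdot\rangle_\lambda$ and $\mathcal{D}_\lambda$, and $\mathcal{D}_\lambda$ is diagonal in the $e_j(\lambda)$.

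Next I would fix the threshold $\Lambda_1$. By Lemma~\ref{lem0002}, $\alpha_j(\lambda)$ is nondecreasing in $\lambda$ with $\alpha_j(\lambda)\to\overline{\alpha}_j$ as $\lambda\to+\infty$, and the hypothesis $0\not\in\sigma_p(-\Delta+b(x),H_0^1(\Omega_a))$ forces $\overline{\alpha}_j\neq1$ for every $j$; hence, by the definition of $l_0$, $\overline{\alpha}_{l_0}<1<\overline{\alpha}_{l_0+1}$. It follows that $\alpha_j(\lambda)\leq\overline{\alpha}_j\leq\overline{\alpha}_{l_0}<1$ for all $j\leq l_0$ and all $\lambda\geq1$, whereas $\alpha_{l_0+1}(\lambda)$ increases to $\overline{\alpha}_{l_0+1}>1$. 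Since $t\mapsto1-\frac1t$ is increasing and continuous on $(0,+\infty)$, there is $\Lambda_1\geq\Lambda_0$ with $1-\frac{1}{\alpha_{l_0+1}(\lambda)}\geq\frac12(1-\frac{1}{\overline{\alpha}_{l_0+1}})>0$ for all $\lambda\geq\Lambda_1$. In particular $\{j:\alpha_j(\lambda)\leq1\}=\{1,\dots,l_0\}$ for $\lambda\geq\Lambda_1$, so $\mathcal{F}_{\lambda,-}^{\perp}=\mathrm{span}\{e_1(\lambda),\dots,e_{l_0}(\lambda)\}$ and $\mathcal{F}_{\lambda,+}^{\perp}=\mathrm{span}\{e_j(\lambda):j\geq l_0+1\}$, and the dimension $l_0$ of $\mathcal{F}_{\lambda,-}^{\perp}$ is independent of $\lambda\in[\Lambda_1,+\infty)$.

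The two estimates are then immediate. On $\mathcal{F}_{\lambda,-}^{\perp}$, for each $j\leq l_0$ the monotonicity of $t\mapsto1-\frac1t$ and $\alpha_j(\lambda)\leq\overline{\alpha}_{l_0}$ give $\alpha_j(\lambda)-1=\alpha_j(\lambda)(1-\frac{1}{\alpha_j(\lambda)})\leq\alpha_j(\lambda)(1-\frac{1}{\overline{\alpha}_{l_0}})$; summing over $j\leq l_0$ yields $\mathcal{D}_\lambda(u,u)\leq(1-\frac{1}{\overline{\alpha}_{l_0}})\|u\|_\lambda^2$, which, the bracket being negative, is in particular $\leq-\frac12(1-\frac{1}{\overline{\alpha}_{l_0}})\|u\|_\lambda^2$, the asserted bound. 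On $\mathcal{F}_{\lambda,+}^{\perp}\oplus\mathcal{F}_\lambda$, write $u=u_1+u_2$ with $u_1\in\mathcal{F}_{\lambda,+}^{\perp}$ and $u_2\in\mathcal{F}_\lambda$; by the $\mathcal{D}_\lambda$‑orthogonality, $\mathcal{D}_\lambda(u,u)=\sum_{j\geq l_0+1}c_j^2(\alpha_j(\lambda)-1)+\|u_2\|_\lambda^2$. For $j\geq l_0+1$, $\alpha_j(\lambda)-1=\alpha_j(\lambda)(1-\frac{1}{\alpha_j(\lambda)})\geq\frac12(1-\frac{1}{\overline{\alpha}_{l_0+1}})\alpha_j(\lambda)$ by the choice of $\Lambda_1$, so $\mathcal{D}_\lambda(u_1,u_1)\geq\frac12(1-\frac{1}{\overline{\alpha}_{l_0+1}})\|u_1\|_\lambda^2$; since $\frac12(1-\frac{1}{\overline{\alpha}_{l_0+1}})<1$ we also have $\|u_2\|_\lambda^2\geq\frac12(1-\frac{1}{\overline{\alpha}_{l_0+1}})\|u_2\|_\lambda^2$, and adding these together with $\|u\|_\lambda^2=\|u_1\|_\lambda^2+\|u_2\|_\lambda^2$ gives the second inequality.

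The substance of the argument is really contained in Lemma~\ref{lem0002}; once the spectral decomposition and the monotone convergence $\alpha_j(\lambda)\to\overline{\alpha}_j$ with $\overline{\alpha}_j\neq1$ are in hand, the present lemma is mostly bookkeeping. The one point that needs a little care — and the step I expect to be the most delicate — is verifying that the splitting indices do not move once $\lambda\geq\Lambda_1$: that no $\alpha_j(\lambda)$ with $j\leq l_0$ ever reaches the value $1$ (which follows from monotonicity in $\lambda$ together with the strict inequality $\overline{\alpha}_{l_0}<1$) and that $\alpha_{l_0+1}(\lambda)$ has crossed strictly above $1$, so that the negative subspace $\mathcal{F}_{\lambda,-}^{\perp}$ retains its fixed, $\lambda$‑independent dimension $l_0$ on $[\Lambda_1,+\infty)$.
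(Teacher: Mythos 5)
Your proposal is correct and follows essentially the same route as the paper: both rest on the eigenbasis and the monotone convergence $\alpha_j(\lambda)\to\overline{\alpha}_j$ from Lemma~\ref{lem0002}, fix $\Lambda_1$ so that $1-\frac{1}{\alpha_{l_0+1}(\lambda)}\geq\frac12(1-\frac{1}{\overline{\alpha}_{l_0+1}})$, and then estimate $\mathcal{D}_\lambda$ on the subspaces, your explicit coefficient diagonalization being just a spelled-out version of the paper's Rayleigh-quotient bound $\int_{\bbr^3}b^-(x)(u_\lambda^{*})^2dx\leq\frac{1}{\alpha_{l_0+1}(\lambda)}\|u_\lambda^{*}\|_\lambda^2$. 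Note that your intermediate bound $\mathcal{D}_\lambda(u,u)\leq(1-\frac{1}{\overline{\alpha}_{l_0}})\|u\|_\lambda^2$ on $\mathcal{F}_{\lambda,-}^{\perp}$ is the estimate actually needed later (it gives nonpositivity), whereas the constant $-\frac12(1-\frac{1}{\overline{\alpha}_{l_0}})$ in the statement is positive, which appears to be a sign slip in the lemma as written; your observation that the stated inequality then follows trivially is the right way to reconcile this.
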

\begin{proof}
By Step.~2 of Lemma~\ref{lem0002}, there exists $\Lambda_1\geq\Lambda_0$ such that $\frac{1}{\alpha_{l_0}(\lambda)}-1\geq\frac12(\frac{1}{\overline{\alpha}_{l_0}}-1)$ and $(1-\frac{1}{\alpha_{l_0+1}(\lambda)})\geq\frac12(1-\frac{1}{\overline{\alpha}_{l_0+1}})$ for all $\lambda\geq\Lambda_1$.  Now, by the definition of $\mathcal{F}_{\lambda,-}^{\perp}$ and Step.~1 of Lemma~\ref{lem0002}, it is easy to see that $\mathcal{D}_\lambda(u,u)\leq-\frac12(1-\frac{1}{\overline{\alpha}_{l_0}})\|u\|_\lambda^2$ on $\mathcal{F}_{\lambda,-}^{\perp}$ for all $u\in E_\lambda$ with $\lambda\geq\Lambda_1$.  On the other hand, it follows from Step.~1 of Lemma~\ref{lem0002} once more that $(1-\frac{1}{\alpha_{j}(\lambda)})\geq\frac12(1-\frac{1}{\overline{\alpha}_{l_0+1}})$ for all $\lambda\geq\Lambda_1$ and $j\geq l_0+1$.  Now, for every $u\in\mathcal{F}_{\lambda,+}^{\perp}\oplus\mathcal{F}_{\lambda}$, we respectively denote the projections of $u$ on $\mathcal{F}_{\lambda,+}^{\perp}$ and $\mathcal{F}_{\lambda}$ by $u_\lambda^{*}$ and $u_\lambda^{**}$.  Then we have
\begin{eqnarray*}
\mathcal{D}_\lambda(u,u)&=&\|u\|_\lambda^2-\int_{\bbr^3}b^-(x)u^2dx\\
&=&\|u_\lambda^{*}\|_\lambda^2+\|u_\lambda^{**}\|_\lambda^2-\int_{\bbr^3}b^-(x)(u_\lambda^{*})^2dx\\
&\geq&\|u_\lambda^{*}\|_\lambda^2+\|u_\lambda^{**}\|_\lambda^2-\frac{1}{\alpha_{l_0+1}(\lambda)}\|u_\lambda^{*}\|_\lambda^2\\
&\geq&\frac12(1-\frac{1}{\overline{\alpha}_{l_0+1}})\|u\|_\lambda^2,
\end{eqnarray*}
which completes the proof.
\end{proof}

\subsection{Critical points of the functional $\mathcal{E}_{\lambda,\delta}(u)$}
Since the condition $(B_3)$ holds, we can choose $v_0\in C_0^{\infty}(\bbr^3)$ such that supp$v_0\subset\Omega_a\cap b^{-1}([0, +\infty))$.  For every $R>0$, we define
\begin{equation*}
\mathcal{Q}_{\lambda,R}=\{u=z+tv_0\mid t\geq0,z\in\mathcal{F}_{\lambda,-}^{\perp}, \|u\|_\lambda\leq R\}.
\end{equation*}
Then we have the following lemma, which implies $\mathcal{E}_{\lambda,\delta}(u)$ has a linking structure in $E_\lambda$ for $\lambda\geq\Lambda_1$.
\begin{lemma}\label{lem0004}
There exist $f_1^*>0$, $\delta_0\in(0, \min\{1,\frac12(S^{-1}\|K\|_{L^2(\bbr^3)}^{-2}f_2)^{\frac12}\})$, $\rho_0>0$ and $R_0>0$ which are all independent on $\lambda\geq\Lambda_1$ such that
\begin{equation*}
\inf_{\mathcal{S}_{\lambda,\rho_0}\cap(\mathcal{F}_{\lambda,+}^{\perp}\oplus\mathcal{F}_{\lambda})}\mathcal{E}_{\lambda,\delta_0}(u)
>\sup_{\partial\mathcal{Q}_{\lambda,R_0}}\mathcal{E}_{\lambda,\delta_0}(u)
\end{equation*}
for $f_1<f_1^*$ and $\lambda\geq\Lambda_1$, where $f_1$ is given by the condition $(F_1)$ and $\mathcal{S}_{\lambda,\rho_0}=\{u\in E_\lambda\mid\|u\|_\lambda=\rho_0\}$.
\end{lemma}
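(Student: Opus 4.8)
The plan is to verify that $\mathcal{E}_{\lambda,\delta}$ has the geometry demanded by the linking theorem, relative to the splitting $E_\lambda=Y_\lambda\oplus\mathcal{F}_{\lambda,-}^{\perp}$ with $Y_\lambda:=\mathcal{F}_{\lambda,+}^{\perp}\oplus\mathcal{F}_{\lambda}$, on which (by Lemma~\ref{lem0003}) $\mathcal{D}_\lambda$ is uniformly positive definite, while $\mathcal{F}_{\lambda,-}^{\perp}$ is the finite-dimensional subspace on which it is uniformly negative definite. Since the only real issue is $\lambda$-uniformity, I would first record that, because $\text{supp}\,v_0\subset\Omega_a=a^{-1}(0)$, one has $\lambda a(x)v_0^2\equiv0$ and $b^-(x)\equiv0$ on $\text{supp}\,v_0$; hence $\|v_0\|_\lambda^2=\int_{\bbr^3}(|\nabla v_0|^2+b(x)v_0^2)\,dx=:\kappa$ is independent of $\lambda\ge1$, $\mathcal{D}_\lambda(v_0,v_0)=\kappa$, and $\mathcal{D}_\lambda(z,v_0)=\langle z,v_0\rangle_\lambda=0$ for every $z\in\mathcal{F}_{\lambda,-}^{\perp}\subset\mathcal{F}_\lambda^{\perp}$. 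Together with the $\lambda$-independent embeddings $E_\lambda\hookrightarrow L^q(\bbr^3)$, $2\le q\le6$, coming from \eqref{eq0023}, this makes the decomposition $u=z+tv_0$ orthogonal with $\|u\|_\lambda^2=\|z\|_\lambda^2+t^2\kappa$, and lets every constant below be chosen once and for all. We may take $v_0\ge0$ and fix $\eta>0$ with $|\{v_0\ge\eta\}|>0$.

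\emph{The sphere $\mathcal{S}_{\lambda,\rho_0}\cap Y_\lambda$} is the routine half. On $Y_\lambda$, Lemma~\ref{lem0003} gives $\tfrac12\mathcal{D}_\lambda(u,u)\ge\beta\|u\|_\lambda^2$ with $\beta=\tfrac14(1-\overline\alpha_{l_0+1}^{-1})>0$ uniform; the Poisson term $\tfrac14\|\nabla\phi_{u,\delta}\|_{L^2(\bbr^3)}^2$ is nonnegative; and $(F_1)$-$(F_2)$ give $\int_{\bbr^3}F(x,u)\le\ve c\|u\|_\lambda^2+C_\ve\|u\|_\lambda^p$ with $\lambda$-free constants. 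Choosing $\ve$ small and then $\rho_0$ small yields $\mathcal{E}_{\lambda,\delta}(u)\ge\tfrac\beta4\rho_0^2=:\alpha>0$ on $\mathcal{S}_{\lambda,\rho_0}\cap Y_\lambda$, for all $\lambda\ge\Lambda_1$, all admissible $\delta$, and independently of $f_1$. Fix $\rho_0$ and $\alpha$.

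\emph{The boundary $\partial\mathcal{Q}_{\lambda,R_0}$} has a flat face $\{t=0\}$ and a spherical face $\{\|u\|_\lambda=R_0\}$. On the flat face $u=z\in\mathcal{F}_{\lambda,-}^{\perp}$, Lemma~\ref{lem0003} gives $\tfrac12\mathcal{D}_\lambda(z,z)\le-\tfrac{\beta_-}{2}\|z\|_\lambda^2$ with $\beta_->0$; the penalization is essential here, since $K\equiv0$ on $\Omega_a'$ and $|g_\delta(s)|\le\delta$ off $\Omega_a'$ force $|G_\delta(x,z)|\le2\delta K(x)|z|$, so by H\"older ($\tfrac56=\tfrac12+\tfrac13$) and the Sobolev estimate the Poisson term is only $O(\delta^2\|z\|_\lambda^2)$ rather than $O(\|z\|_\lambda^4)$; together with $F(x,z)\ge-\tfrac{f_1}{2}(z^2+|z|^p)$ from $(F_1)$ and $\|z\|_\lambda\le R_0$, this bounds $\mathcal{E}_{\lambda,\delta}(z)$ by $(-\tfrac{\beta_-}{2}+C\delta^2+\tfrac{f_1}{2}(c+c'R_0^{p-2}))\|z\|_\lambda^2\le0<\alpha$ provided $\delta$ and $f_1$ are small relative to $\beta_-$. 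On the spherical face write $u=z+tv_0$, $t\ge0$; then $\tfrac12\mathcal{D}_\lambda(u,u)\le-\tfrac{\beta_-}{2}\|z\|_\lambda^2+\tfrac12 t^2\kappa$, the Poisson term is still $O(\delta^2\|z\|_\lambda^2)$ (as $\phi_{u,\delta}=\phi_{z,\delta}$, $v_0$ vanishing off $\Omega_a'$), and the new ingredient is $(F_3)$: it gives $F(x,s)\ge\tfrac M2 s^3-C_M$ for $s\ge0$ with $M$ arbitrarily large. Splitting $\{v_0\ge\eta\}$ into $\{z\ge-\tfrac{t\eta}{2}\}$ (where $z+tv_0\ge\tfrac{t\eta}{2}$) and its complement (of measure $\le4\|z\|_{L^2(\bbr^3)}^2/(t^2\eta^2)\lesssim R_0^2/t^2$), and using $F\ge-\tfrac{f_1}{2}(|\cdot|^2+|\cdot|^p)$ elsewhere, produces a threshold $t_0$ (with $R_0^2-t_0^2\kappa=\theta R_0^2$, $\theta\in(0,1)$ fixed) such that: for $t\ge t_0$ the cubic $\tfrac M2 t^3$ with $M$ large dominates $\tfrac12 t^2\kappa$ and the $f_1$- and $\delta$-terms, so $\mathcal{E}_{\lambda,\delta}(u)<0$; while for $t\le t_0$ one has $\|z\|_\lambda^2\ge\theta R_0^2$ and falls back to the flat-face estimate. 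In both subcases $\mathcal{E}_{\lambda,\delta}(u)$ ends up $\le-c_1R_0^2+c_2$ modulo $O(\delta^2R_0^2)+O(f_1R_0^p)$ (fixed $c_1>0,c_2$), hence $<\alpha$ once $\delta,f_1$ are small and $R_0$ is large. The constants are therefore fixed in the order: $v_0$ (spread out enough that $\kappa$ is small against $\eta^2|\{v_0\ge\eta\}|$ and $\beta_-$), then $\delta_0$ and $M$, then $R_0$ large, then $f_1^*=f_1^*(R_0,\delta_0)$, subject also to $\rho_0<R_0/\sqrt\kappa$ so that $\mathcal{S}_{\lambda,\rho_0}\cap Y_\lambda$ and $\partial\mathcal{Q}_{\lambda,R_0}$ genuinely link.

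\emph{The main obstacle} is not a single inequality but the combination of two tensions. First, every threshold must be uniform in $\lambda$ while $\mathcal{F}_{\lambda,\pm}^{\perp}$ and $\mathcal{F}_\lambda$ all move with $\lambda$ — exactly what Lemma~\ref{lem0003}, the $\lambda$-free value $\kappa$ and $\mathcal{D}_\lambda$-orthogonality of $v_0$, and the uniform embeddings \eqref{eq0023} are designed to absorb. Second, and this is the reason both "$f_1$ small" and "$\delta_0$ small" are genuinely required: the superquadratic growth $(F_3)$ driving $\mathcal{E}_{\lambda,\delta}$ down along $v_0$ coexists with $F$ being allowed to be \emph{super}quadratically \emph{negative} along $\mathcal{F}_{\lambda,-}^{\perp}$ (the $f_1|t|^p$ term in $(F_1)$), so $f_1$ must be controlled against the spectral gap $\beta_-$; and the penalization is precisely what prevents the Poisson term from being quartic — which would swamp that gap — reducing it to an $O(\delta_0^2\|u\|_\lambda^2)$ perturbation that a small $\delta_0$ renders harmless.
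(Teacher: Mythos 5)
Your setup (the $\lambda$-independent value $\kappa=\|v_0\|_\lambda^2$, the $\mathcal{D}_\lambda$-orthogonality of $v_0$ to $\mathcal{F}_{\lambda,-}^{\perp}$, the sphere estimate on $\mathcal{F}_{\lambda,+}^{\perp}\oplus\mathcal{F}_\lambda$, the observation that the penalization reduces the Poisson term to an $O(\delta^2)\|u\|_{L^3(\bbr^3)}^2$ perturbation via \eqref{eq0007}, and the flat-face estimate with $\delta_0$ and $f_1$ small against the spectral gap) is sound and consistent with the paper. The genuine gap is in your treatment of the lateral face $\{u=z+tv_0,\ \|u\|_\lambda=R_0\}$. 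Your superquadratic gain comes only from the set $\{v_0\ge\eta\}\cap\{z\ge-\tfrac{t\eta}{2}\}$, whose measure you bound below by $|\{v_0\ge\eta\}|-4\|z\|_{L^2(\bbr^3)}^2/(t^2\eta^2)$. In the part of the sphere where the $\mathcal{F}_{\lambda,-}^{\perp}$-component dominates (e.g.\ $t$ near your threshold $t_0$, where $\|z\|_\lambda^2\approx\theta R_0^2$ and $\kappa t^2\approx(1-\theta)R_0^2$), this lower bound is $|\{v_0\ge\eta\}|-O\bigl(d_*\kappa/((1-\theta)\eta^2)\bigr)$ and may be negative, so the cubic gain disappears entirely; what remains, $-\tfrac{\beta_-}{2}\|z\|_\lambda^2+\tfrac{\kappa}{2}t^2$, is $\le0$ only under an inequality of the type $\beta_-\,\eta^2|\{v_0\ge\eta\}|\gtrsim d_*\kappa$. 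You propose to secure this by choosing $v_0$ ``spread out enough'', but that is not at your disposal: $\mathrm{supp}\,v_0$ must lie in the fixed set $\Omega_a\cap b^{-1}([0,+\infty))$, which has finite measure (it sits inside $\mathcal{A}_\infty$), so by the Poincar\'e inequality $\eta^2|\{v_0\ge\eta\}|\le\|v_0\|_{L^2(\bbr^3)}^2\le\lambda_1^{-1}\int_{\bbr^3}|\nabla v_0|^2dx\le\lambda_1^{-1}\kappa$, where $\lambda_1>0$ is the first Dirichlet eigenvalue of (the interior of) that set. The scale-invariant ratio $\eta^2|\{v_0\ge\eta\}|/\kappa$ is therefore bounded by a constant determined by the data $a,b$, and there is no reason it exceeds the threshold involving $d_*$ and $\beta_-$ that your argument requires; no choice of $\theta$, $M$, $R_0$ removes this, since the obstruction is exactly the regime where $z$ is large and $t$ comparable to $R_0$.

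The paper closes precisely this regime by a different device: it works on the whole finite-dimensional space $\mathcal{F}_{\lambda,-}^{\perp}\oplus\bbr v_0$ (dimension $\le k_0+1$ uniformly in $\lambda\ge\Lambda_1$), invokes a $\lambda$-uniform equivalence $\|u\|_\lambda\le d_{k_0}\|u\|_{L^3(\bbr^3)}\le d_{k_0}'\|u\|_{L^2(\bbr^3)}$ there, and then uses $(F_3)$ (through the Fatou-type ray argument for the dominating functional $J$) to make the nonlinear term drive $J(tu)\to-\infty$ in \emph{every} direction of that space, not just along $v_0$; in particular no smallness of $f_1$ and no geometric property of $v_0$ beyond $\mathrm{supp}\,v_0\subset\Omega_a\cap b^{-1}([0,+\infty))$ are needed for the lateral face (Claim~1 of the paper holds for all $\delta\in(0,1)$ and all $f_1$), the smallness of $\delta_0$ and $f_1$ being used only on the flat face, and only to push it below half of the sphere level rather than below $0$. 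To repair your proof you would have to extract superquadratic decay of $\mathcal{E}_{\lambda,\delta}$ along the $z$-directions as well (i.e.\ use $(F_3)$ on all of $\mathcal{F}_{\lambda,-}^{\perp}\oplus\bbr v_0$ together with the uniform norm equivalence), at which point you essentially recover the paper's argument.
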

\begin{proof}
By the conditions $(F_1)$-$(F_2)$, there exist $\ve,C_\ve>0$ such that $|f(x,t)|\leq\ve|t|+C_\ve|t|^{p-1}$.  It follows that $|\int_{\bbr^3}F(x,u)dx|\leq\ve\|u\|_{L^2(\bbr^3)}^2+C_\ve\|u\|_{L^p(\bbr^3)}^p$.  Since $\Lambda_1\geq1$, by \eqref{eq0023} and the Ho\"lder and Sobolev inequalities, we have
\begin{equation}  \label{eq0027}
\|u\|_{L^p(\bbr^3)}^p\leq d_*^{\frac{6-p}{4}}S^{-\frac{3p-6}{4}}\|u\|_\lambda^p,\quad\text{for all }u\in E_\lambda\text{ with }\lambda\geq\Lambda_1,
\end{equation}
where $d_*>0$ is a constant given by \eqref{eq0024}.  Let $\ve_0=\frac{1}{8 d_*}(1-\frac{1}{\overline{\alpha}_{l_0+1}})$.  Note that $\phi_{u,\delta}\geq0$ on $\bbr^3$ by the construction of $g_\delta(x,t)$ and the maximum principle, so by Lemma~\ref{lem0003} and \eqref{eq0023} once more, we can see that
\begin{eqnarray*}
\mathcal{E}_{\lambda,\delta}(u)&=&\frac12\mathcal{D}_\lambda(u,u)+\frac14\int_{\bbr^3}\phi_{u,\delta}G_\delta(x,u)dx-\int_{\bbr^3}F(x,u)dx\\
&\geq&\frac12\mathcal{D}_\lambda(u,u)-|\int_{\bbr^3}F(x,u)dx|\\
&\geq&\frac18(1-\frac{1}{\overline{\alpha}_{l_0+1}})\|u\|_\lambda^2-C_*\|u\|_\lambda^p
\end{eqnarray*}
for all $u\in \mathcal{F}_{\lambda,+}^{\perp}\oplus\mathcal{F}_{\lambda}$ with $\lambda\geq\Lambda_1$, where $C_*=C_{\ve_0}d_*^{\frac{6-p}{4}}S^{-\frac{3p-6}{4}}$.  Let $\rho_0=\bigg(\frac{2d_*\ve_0}{pC_*}\bigg)^{\frac{1}{p-2}}$.  Then
\begin{equation}  \label{eq0008}
\inf_{\mathcal{S}_{\lambda,\rho_0}\cap(\mathcal{F}_{\lambda,+}^{\perp}\oplus\mathcal{F}_{\lambda})}\mathcal{E}_{\lambda,\delta}(u)
\geq\frac{p-2}{p}\bigg(\frac{2d_*\ve_0}{pC_*}\bigg)^{\frac{2}{p-2}}\quad\text{for }\lambda\geq\Lambda_1.
\end{equation}
In what follows, we will prove that there exist $\delta_0\in(0, 1)$ and $R_0>\rho_0$ independent of $\lambda\geq\Lambda_1$ such that $\sup_{\partial\mathcal{Q}_{\lambda,R_0}}\mathcal{E}_{\lambda,\delta_0}(u)<\frac{p-2}{p}\bigg(\frac{2d_*\ve_0}{pC_*}\bigg)^{\frac{2}{p-2}}$ for all $\lambda\geq\Lambda_1$.  For the sake of clarity, the proof will be further performed through the following two Claims.

{\bf Claim~1}\quad There exists $R_0>\rho_0$ independent of $\lambda\geq\Lambda_1$ such that
\begin{equation*}
\sup_{\{u=z+tv_0\mid t\geq0,z\in\mathcal{F}_{\lambda,-}^{\perp}, \|u\|_\lambda=R\}}\mathcal{E}_{\lambda,\delta}(u)<0
\end{equation*}
for all $\delta\in(0 ,1)$ and $\lambda\geq\Lambda_1$.

Indeed, since the condition $(K_1)$ holds and $E_\lambda$ with $\lambda\geq\Lambda_1$ is embedded continuously into $\h$, for every $u\in E_\lambda$ with $\lambda\geq\Lambda_1$, we have $K(x)|u(x)|\in L^{\frac65}(\bbr^3)$ due to the H\"older inequality.  It follows from \cite[Lemma~2.1]{B02} that the Poisson equation $-\Delta\phi=K(x)|u(x)|$ in $\bbr^3$ has a unique solution in $D^{1,2}(\bbr^3)$ which can be given by
\begin{equation*}
\overline{\phi}_{u}(x)=\frac{1}{4\pi}\int_{\bbr^3}\frac{K(y)|u(y)|}{|x-y|}dy.
\end{equation*}
Moreover, by the H\"older and Sobolev inequalities, $\|\overline{\phi}_{u}\|_{L^6(\bbr^3)}\leq S^{-1}\|K\|_{L^2(\bbr^3)}\|u\|_{L^{3}(\bbr^3)}$.  By the construction of $G_\delta(x,u)$ and the condition $(K_2)$, we can see that
\begin{eqnarray}
\int_{\bbr^3}\phi_{u,\delta}G_\delta(x,u)dx&\leq&\int_{\bbr^3\backslash\Omega_K}2\delta K(x)|u(x)|\int_{\bbr^3\backslash\Omega_K}\frac{2\delta K(y)|u(y)|}{4\pi|x-y|}dydx\notag\\
&\leq&\int_{\bbr^3}4\delta^2 K(x)|u(x)|\overline{\phi}_{u}(x)dx\notag\\
&\leq&4\delta^2S^{-1}\|K\|_{L^2(\bbr^3)}^2\|u\|_{L^3(\bbr^3)}^2,\label{eq0007}
\end{eqnarray}
which implies
\begin{equation}\label{eq0026}
\sup_{\{u=z+tv_0\mid t\geq0,z\in\mathcal{F}_{\lambda,-}^{\perp}, \|u\|_\lambda=R\}}\mathcal{E}_{\lambda,\delta}(u)\leq\sup_{\{u=z+tv_0\mid t\geq0,z\in\mathcal{F}_{\lambda,-}^{\perp}, \|u\|_\lambda=R\}}J(u)\quad\text{for }0<\delta<1,
\end{equation}
where $J(u)=\frac12\|u\|_\lambda^2+S^{-1}\|K\|_{L^2(\bbr^3)}^2\|u\|_{L^3(\bbr^3)}^2-\int_{\bbr^3}F(x,u)dx$.  Since $\lambda\geq\Lambda_1$, we have dim$(\mathcal{F}_{\lambda,-}^{\perp}\oplus\mathbb{R}v_0)\leq k_0+1$ by Lemma~\ref{lem0002} for some $k_0$ independent of $\lambda\geq\Lambda_1$.  Hence, there exists $d_{k_0},d_{k_0}'>0$ independent on $\lambda\geq\Lambda_1$ such that $\|u\|_\lambda\leq d_{k_0}\|u\|_{L^3(\bbr^3)}\leq d_{k_0}'\|u\|_{L^{2}(\bbr^3)}$ for all $u\in\mathcal{F}_{\lambda,-}^{\perp}\oplus\mathbb{R}v_0$.  By the condition $(F_3)$ and the Fatou lemma, for every $u\in\mathcal{F}_{\lambda,-}^{\perp}\oplus\mathbb{R}v_0$ with $\|u\|_\lambda=1$, we have
\begin{equation*}
\lim_{t\to+\infty}\frac{J(tu)}{t^3}\leq\lim_{t\to+\infty}\frac{d_{k_0}'d_*+d_{k_0}'d_{k_0}^{-1}d_*}{2t}-\lim_{t\to+\infty}\frac12\int_{\bbr^3}\frac{2F(x,tu)}{|tu|^3}u^3dx
=-\infty,
\end{equation*}
which together with \eqref{eq0026}, implies there exists $R_0>0$ independent on $\lambda\geq\Lambda_1$ such that
\begin{equation*}
\sup_{\{u=z+tv_0\mid t\geq0,z\in\mathcal{F}_{\lambda,-}^{\perp}, \|u\|_\lambda=R_0\}}\mathcal{E}_{\lambda,\delta}(u)<0
\end{equation*}
for all $\delta\in(0 ,1)$ and $\lambda\geq\Lambda_1$.

{\bf Claim~2}\quad There exists $\delta_0\in(0, \min\{1,\frac12(S^{-1}\|K\|_{L^2(\bbr^3)}^{-2}f_2)^{\frac12}\})$ and $f_1^*>0$ independent of $\lambda\geq\Lambda_1$ such that
\begin{equation*}
\sup_{\{u\in\mathcal{F}_{\lambda,-}^{\perp}\mid \|u\|_\lambda\leq R_0\}}\mathcal{E}_{\lambda,\delta_0}(u)\leq\frac{p-2}{2p}\bigg(\frac{2d_*\ve_0}{pC_*}\bigg)^{\frac{2}{p-2}}
\end{equation*}
for $f_1<f_1^*$ and $\lambda\geq\Lambda_1$.

Indeed, by Lemma~\ref{lem0003} and \eqref{eq0007}, we have
\begin{equation*}
\sup_{\{u\in\mathcal{F}_{\lambda,-}^{\perp}\mid \|u\|_\lambda\leq R_0\}}\mathcal{E}_{\lambda,\delta}(u)\leq\sup_{\{u\in\mathcal{F}_{\lambda,-}^{\perp}\mid \|u\|_\lambda\leq R_0\}}(\delta^2S^{-1}\|K\|_{L^2(\bbr^3)}^2\|u\|_{L^3(\bbr^3)}^2-\int_{\bbr^3}F(x,u)dx).
\end{equation*}
It follows from the condition $(F_1)$, \eqref{eq0023} and \eqref{eq0027} that
\begin{eqnarray*}
&&\sup_{\{u\in\mathcal{F}_{\lambda,-}^{\perp}\mid \|u\|_\lambda\leq R_0\}}(\delta^2S^{-1}\|K\|_{L^2(\bbr^3)}^2\|u\|_{L^3(\bbr^3)}^2-\int_{\bbr^3}F(x,u)dx)\\
&\leq&\delta^2S^{-1}\|K\|_{L^2(\bbr^3)}^2\bigg(\frac{d_*}{S}\bigg)^{\frac12}R_0^2+f_1(d_*R_0^2+d_*^{\frac{6-p}{4}}S^{-\frac{3p-6}{4}}R_0^p).
\end{eqnarray*}
Therefore, there exists $\delta_0\in(0, 1)$ and $f_1^*>0$ independent of $\lambda\geq\Lambda_1$ such that
\begin{equation*}
\sup_{\{u\in\mathcal{F}_{\lambda,-}^{\perp}\mid \|u\|_\lambda\leq R_0\}}\mathcal{E}_{\lambda,\delta_0}(u)\leq\frac{p-2}{2p}\bigg(\frac{2d_*\ve_0}{pC_*}\bigg)^{\frac{2}{p-2}}
\end{equation*}
for $f_1<f_1^*$ and $\lambda\geq\Lambda_1$.

Now, the conclusion follows immediately from \eqref{eq0008}, Claim~1 and Claim~2.
\end{proof}

Since Lemma~\ref{lem0004} holds, by the well known linking theorem (cf. \cite{R86}), $\mathcal{E}_{\lambda,\delta_0}(u)$ has a  $(C)_{c_\lambda}$ sequence in $E_\lambda$ with $\lambda\geq\Lambda_1$.  That is, there exists $\{u_{\lambda,n}\}\subset E_\lambda$ such that $\mathcal{E}_{\lambda,\delta_0}(u_{\lambda,n})=c_\lambda+o_n(1)$ and $(1+\|u_{\lambda,n}\|_{\lambda})\mathcal{E}_{\lambda,\delta_0}'(u_{\lambda,n})=o_n(1)$ strongly in $E_\lambda^*$.  Furthermore, we have $c_\lambda\in[\frac{p-2}{p}\bigg(\frac{2d_*\ve_0}{pC_*}\bigg)^{\frac{2}{p-2}}, \frac12R_0^2+\frac{p-2}{p}\bigg(\frac{2d_*\ve_0}{pC_*}\bigg)^{\frac{2}{p-2}}+f_1(d_*R_0^2+d_*^{\frac{6-p}{4}}S^{-\frac{3p-6}{4}}R_0^p)]$.

\begin{lemma}\label{lem0005}
There exist $\Lambda_2\geq\Lambda_1$ and $C^*>0$ independent of $\lambda\geq\Lambda_2$ such that $\|u_{\lambda,n}\|_\lambda\leq C^*+o_n(1)$ for all $\lambda\geq\Lambda_2$.
\end{lemma}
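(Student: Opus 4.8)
The plan is to use the two properties of the $(C)_{c_\lambda}$ sequence $\{u_{\lambda,n}\}$ recorded above --- $\mathcal{E}_{\lambda,\delta_0}(u_{\lambda,n})=c_\lambda+o_n(1)$ and $(1+\|u_{\lambda,n}\|_\lambda)\mathcal{E}_{\lambda,\delta_0}'(u_{\lambda,n})=o_n(1)$ in $E_\lambda^*$, which in particular yields $\langle\mathcal{E}_{\lambda,\delta_0}'(u_{\lambda,n}),u_{\lambda,n}\rangle=o_n(1)$ --- through two different linear combinations. The first, built on $(F_4)$ and on the smallness of $\delta_0$, will bound $\|u_{\lambda,n}\|_{L^3(\bbr^3)}$; the second, built on $(F_5)$, will then bound $\mathcal{D}_\lambda(u_{\lambda,n},u_{\lambda,n})$ from above; a final elementary estimate of $\int_{\bbr^3}b^-(x)u_{\lambda,n}^2dx$, resting on $(B_2)$--$(B_3)$ and on the size of $\lambda$, converts these into the sought uniform bound on $\|u_{\lambda,n}\|_\lambda$.

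\emph{Step 1 (an $L^3$ bound).} From the identity
\begin{equation*}
2\mathcal{E}_{\lambda,\delta_0}(u)-\langle\mathcal{E}_{\lambda,\delta_0}'(u),u\rangle=\frac12\int_{\bbr^3}\phi_{u,\delta_0}\big(G_{\delta_0}(x,u)-g_{\delta_0}(x,u)u\big)dx+\int_{\bbr^3}\big(f(x,u)u-2F(x,u)\big)dx
\end{equation*}
and the pointwise inequalities $0\leq G_{\delta_0}(x,t)\leq2\delta_0K(x)|t|$ and $G_{\delta_0}(x,t)-g_{\delta_0}(x,t)t\geq-G_{\delta_0}(x,t)$ (which follow from the definition of $g_{\delta_0}$ together with the fact that $K\equiv0$ on $\Omega_a'$, so that the truncation $g_{\delta_0}$ is effective wherever $K\not=0$), combined with $\phi_{u,\delta_0}\geq0$, $\phi_{u,\delta_0}\leq2\delta_0\overline{\phi}_u$ and the H\"older and Sobolev inequalities (exactly as in \eqref{eq0002} and \eqref{eq0007}), I would bound the nonlocal term from below by $-2\delta_0^2S^{-1}\|K\|_{L^2(\bbr^3)}^2\|u\|_{L^3(\bbr^3)}^2$, while $(F_4)$ gives $\int_{\bbr^3}(f(x,u)u-2F(x,u))dx\geq f_2\|u\|_{L^3(\bbr^3)}^3$. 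Since $\delta_0$ was chosen so that $4\delta_0^2S^{-1}\|K\|_{L^2(\bbr^3)}^2<f_2$, the cubic term dominates the quadratic one once $\|u_{\lambda,n}\|_{L^3(\bbr^3)}$ is large; together with the uniform upper bound on $c_\lambda$ recorded before the lemma this produces a constant $C_1>0$, independent of $\lambda\geq\Lambda_1$, with $\|u_{\lambda,n}\|_{L^3(\bbr^3)}\leq C_1+o_n(1)$.

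\emph{Step 2 (a bound on $\mathcal{D}_\lambda$).} With $\mu>2$ as in $(F_5)$, I would use
\begin{equation*}
\mathcal{E}_{\lambda,\delta_0}(u)-\frac1\mu\langle\mathcal{E}_{\lambda,\delta_0}'(u),u\rangle=\Big(\frac12-\frac1\mu\Big)\mathcal{D}_\lambda(u,u)+\frac14\int_{\bbr^3}\phi_{u,\delta_0}G_{\delta_0}(x,u)dx-\frac1{2\mu}\int_{\bbr^3}\phi_{u,\delta_0}g_{\delta_0}(x,u)u\,dx+\frac1\mu\int_{\bbr^3}\big(f(x,u)u-\mu F(x,u)\big)dx.
\end{equation*}
By $(F_5)$ the last integral is $\geq-\frac{f_3}{\mu}\|u_{\lambda,n}\|_{L^3(\bbr^3)}^3$, hence bounded by Step~1; discarding the nonnegative term $\frac14\int_{\bbr^3}\phi_{u,\delta_0}G_{\delta_0}(x,u)dx$ and estimating the remaining nonlocal integral by $|g_{\delta_0}(x,t)t|\leq2\delta_0K(x)|t|$, $\phi_{u,\delta_0}\leq2\delta_0\overline{\phi}_u$ and the $L^3(\bbr^3)$-bound shows that it too is bounded below by a constant. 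Since $\frac12-\frac1\mu>0$ and $c_\lambda$ is uniformly bounded above, this gives $\mathcal{D}_\lambda(u_{\lambda,n},u_{\lambda,n})\leq C_2+o_n(1)$ with $C_2$ independent of $\lambda\geq\Lambda_1$.

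\emph{Step 3 (conclusion).} Write $\mathcal{D}_\lambda(u_{\lambda,n},u_{\lambda,n})=\|u_{\lambda,n}\|_\lambda^2-\int_{\bbr^3}b^-(x)u_{\lambda,n}^2dx$. If $b\geq0$ then $b^-\equiv0$ and the claim is immediate; otherwise $(B_3)$ gives $|\mathcal{B}_0|<+\infty$. Since $b^-$ is supported in $\mathcal{B}_0$, splitting $\mathcal{B}_0=(\mathcal{B}_0\cap\mathcal{A}_\infty)\cup(\mathcal{B}_0\backslash\mathcal{A}_\infty)$ and using $(B_2)$ --- on the first piece $1+\sqrt{a(x)}\leq1+\sqrt{a_\infty}$ and $\int_{\mathcal{B}_0}u_{\lambda,n}^2dx\leq|\mathcal{B}_0|^{1/3}\|u_{\lambda,n}\|_{L^3(\bbr^3)}^2$, on the second $1+\sqrt{a(x)}\leq(a_\infty^{-1}+a_\infty^{-1/2})a(x)$ and $\int_{\bbr^3}a(x)u_{\lambda,n}^2dx\leq\lambda^{-1}\|u_{\lambda,n}\|_\lambda^2$ --- one obtains $\int_{\bbr^3}b^-(x)u_{\lambda,n}^2dx\leq C_5\|u_{\lambda,n}\|_{L^3(\bbr^3)}^2+C_6\lambda^{-1}\|u_{\lambda,n}\|_\lambda^2$ with $C_5,C_6$ independent of $\lambda$. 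Inserting Step~1 this is $\leq C_5'+C_6\lambda^{-1}\|u_{\lambda,n}\|_\lambda^2+o_n(1)$, so for $\lambda\geq\Lambda_2:=\max\{\Lambda_1,2C_6\}$ we get $\mathcal{D}_\lambda(u_{\lambda,n},u_{\lambda,n})\geq\frac12\|u_{\lambda,n}\|_\lambda^2-C_5'-o_n(1)$; together with Step~2 this forces $\|u_{\lambda,n}\|_\lambda^2\leq2(C_2+C_5')+o_n(1)$, which is the assertion. The step I expect to be the genuine obstacle is Step~1: this is precisely where the difficulty of the range $3<p\leq4$ is concentrated, and it works only because the penalized nonlocal term is --- thanks to the choice of $\delta_0$ --- a small perturbation of the coercive contribution furnished by $(F_4)$; without this $L^3(\bbr^3)$-bound neither Step~2 nor the concluding estimate (which also uses $|\mathcal{B}_0|<+\infty$) would be available, and one must moreover check that $C_1,C_2,C_5',C_6$ and $\Lambda_2$ can be taken independent of $\lambda$, which is guaranteed by the uniform inequalities \eqref{eq0023}, \eqref{eq0027} and the uniform range of $c_\lambda$.
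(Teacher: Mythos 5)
Your proposal is correct and follows essentially the same route as the paper: the $(C)_{c_\lambda}$ condition tested against the combination $\mathcal{E}_{\lambda,\delta_0}-\tfrac12\langle\mathcal{E}_{\lambda,\delta_0}',\cdot\rangle$ with $(F_4)$ and the $\delta_0$-smallness of the truncated nonlocal term to get the uniform $L^3$ bound, then the combination with $\tfrac1\mu$ and $(F_5)$, and finally the $(B_2)$--$(B_3)$ estimate of $\int b^-u^2$ split over $\mathcal{B}_0$ with the $\lambda^{-1}\|u\|_\lambda^2$ piece absorbed for $\lambda$ large. The only difference is organizational (you isolate the bound on $\mathcal{D}_\lambda$ as a separate step and discard $\tfrac14\int\phi_{u,\delta_0}G_{\delta_0}$ rather than keeping $\tfrac14\int\phi_{u,\delta_0}(G_{\delta_0}-g_{\delta_0}u)$), which does not change the argument.
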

\begin{proof}
By the construction of $g_\delta(x,t)$ and the conditions $(K_1)$ and $(F_4)$, we can see that
\begin{eqnarray}
c_\lambda+o_n(1)&=&\mathcal{E}_{\lambda,\delta_0}(u_{\lambda,n})-\frac12\langle\mathcal{E}_{\lambda,\delta_0}'(u_{\lambda,n}), u_{\lambda,n}\rangle_{E_\lambda^*,E_\lambda}\notag\\
&=&\frac14\int_{\bbr^3}\phi_{u_{\lambda,n},\delta}(G_\delta(x,u_{\lambda,n})-g_\delta(x,u_{\lambda,n})u_{\lambda,n})dx\notag\\
&&+\frac12\int_{\bbr^3}f(x,u_{\lambda,n})u_{\lambda,n}-2F(x,u_{\lambda,n})dx\notag\\
&\geq&-2\delta_0^2\int_{\bbr^3\backslash\Omega_k}\int_{\bbr^3\backslash\Omega_k}\frac{K(y)K(x)|u_{\lambda,n}(y)||u_{\lambda,n}(x)|}{4\pi|x-y|}dydx
+\frac{f_2}{2}\|u_{\lambda,n}\|_{L^3(\bbr^3)}^3\notag\\
&\geq&-2\delta_0^2\int_{\bbr^3}\int_{\bbr^3}\frac{K(y)K(x)|u_{\lambda,n}(y)||u_{\lambda,n}(x)|}{4\pi|x-y|}dydx
+\frac{f_2}{2}\|u_{\lambda,n}\|_{L^3(\bbr^3)}^3.\label{eq0010}
\end{eqnarray}
Since $K(x)|u_{\lambda,n}(x)|\in L^{\frac65}(\bbr^3)$ for every $n\in\bbn$, by the condition $(K_1)$ once more, we can follow the argument as used in \eqref{eq0007} to obtain that
\begin{equation}   \label{eq0009}
\bigg|\int_{\bbr^3\backslash\Omega_k}\int_{\bbr^3\backslash\Omega_k}\frac{K(y)K(x)|u_{\lambda,n}(y)||u_{\lambda,n}(x)|}{4\pi|x-y|}dydx\bigg|
\leq S^{-1}\|K\|_{L^2(\bbr^3)}^2\|u_{\lambda,n}\|_{L^3(\bbr^3)}^2.
\end{equation}
By \eqref{eq0010} and \eqref{eq0009}, we have $c_\lambda+o_n(1)+2\delta_0^2S^{-1}\|K\|_{L^2(\bbr^3)}^2\|u_{\lambda,n}\|_{L^3(\bbr^3)}^2\geq f_2\|u_{\lambda,n}\|_{L^3(\bbr^3)}^3$.  It follows from the Young inequality, the choice of $\delta_0$ and the fact $c_\lambda\leq \frac12R_0^2+\frac{p-2}{p}\bigg(\frac{2d_*\ve_0}{pC_*}\bigg)^{\frac{2}{p-2}}+f_1(d_*R_0^2+d_*^{\frac{6-p}{4}}S^{-\frac{3p-6}{4}}R_0^p)$ that $\|u_{\lambda,n}\|_{L^3(\bbr^3)}\leq C_0+o_n(1)$ for some $C_0>0$ independent of $\lambda\geq\Lambda_1$.  Now, thanks to the conditions $(A_2)$ and $(B_2)$-$(B_3)$ and the H\"older inequality, we have
\begin{eqnarray}
\int_{\bbr^3}b^-(x)|u_{\lambda,n}|^2dx&\leq&\int_{\mathcal{B}_0}b_0(1+\sqrt{a(x)})|u_{\lambda,n}|^2dx\notag\\
&\leq&b_0(1+\sqrt{a_\infty})|\mathcal{B}_0|^{\frac13}\|u_{\lambda,n}\|_{L^3(\bbr^3)}^2
+\frac{1}{\lambda\sqrt{a_\infty}}\|u_{\lambda,n}\|_\lambda^2\notag\\
&\leq&b_0(1+\sqrt{a_\infty})|\mathcal{B}_0|^{\frac13}C_0^2+\frac{1}{\lambda\sqrt{a_\infty}}\|u_{\lambda,n}\|_\lambda^2+o_n(1),\label{eq0028}
\end{eqnarray}
where $\mathcal{B}_0$ is given by the condition $(B_3)$.
By the condition $(F_5)$, \eqref{eq0009} and \eqref{eq0028}, we can obtain that
\begin{eqnarray*}
c_\lambda+o_n(1)&=&\mathcal{E}_{\lambda,\delta_0}(u_{\lambda,n})-\frac1\mu\langle\mathcal{E}_{\lambda,\delta_0}'(u_{\lambda,n}), u_{\lambda,n}\rangle_{E_\lambda^*, E_\lambda}\\
&=&(\frac12-\frac1\mu)\mathcal{D}_\lambda(u_{\lambda,n},u_{\lambda,n})+\int_{\bbr^3}\phi_{u_{\lambda,n},\delta}(\frac14 G_\delta(x,u_{\lambda,n})-\frac1{2\mu}g_\delta(x,u_{\lambda,n})u_{\lambda,n})dx\\
&&+\frac1\mu\int_{\bbr^3}f(x,u_{\lambda,n})u_{\lambda,n}-\mu F(x,u_{\lambda,n})dx\\
&\geq&(\frac12-\frac1\mu-\frac{1}{\lambda\sqrt{a_\infty}})\|u_{\lambda,n}\|_\lambda^2-b_0(1+\sqrt{a_\infty})|\mathcal{B}_0|^{\frac13}C_0^2
-f_3\|u_{\lambda,n}\|_{L^3(\bbr^3)}^3\notag\\
&&+\frac14\int_{\bbr^3}\phi_{u_{\lambda,n},\delta}(G_\delta(x,u_{\lambda,n})-g_\delta(x,u_{\lambda,n})u_{\lambda,n})dx\notag\\
&\geq&(\frac12-\frac1\mu-\frac{1}{\lambda\sqrt{a_\infty}})\|u_{\lambda,n}\|_\lambda^2-b_0(1+\sqrt{a_\infty})|\mathcal{B}_0|^{\frac13}C_0^2
-f_3\|u_{\lambda,n}\|_{L^3(\bbr^3)}^3\notag\\
&&-S^{-1}\|K\|_{L^2(\bbr^3)}^2\|u_{\lambda,n}\|_{L^3(\bbr^3)}^2.
\end{eqnarray*}
Therefore, by $c_\lambda\leq \frac12R_0^2+\frac{p-2}{p}\bigg(\frac{2d_*\ve_0}{pC_*}\bigg)^{\frac{2}{p-2}}+f_1(d_*R_0^2+d_*^{\frac{6-p}{4}}S^{-\frac{3p-6}{4}}R_0^p)$, there exists $\Lambda_2\geq\Lambda_1$ such that $\|u_{\lambda,n}\|_\lambda\leq C^*+o_n(1)$ for $\lambda\geq\Lambda_2$, where $C^*$ is independent of $\lambda\geq\Lambda_2$.
\end{proof}

We close this section by the following.
\begin{proposition}\label{prop0001}
There exists $\Lambda_3\geq\Lambda_2$ such that $(\overline{\mathcal{S}}_{\lambda,\delta_0})$ has a nontrivial solution $u_\lambda$ for $\lambda\geq\Lambda_3$.
\end{proposition}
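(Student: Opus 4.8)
The plan is to obtain $u_\lambda$ as a weak limit of the bounded Cerami sequence $\{u_{\lambda,n}\}$ already constructed above. Fix $\lambda\geq\Lambda_2$. By Lemma~\ref{lem0005} the sequence $\{u_{\lambda,n}\}$ is bounded in the Hilbert space $E_\lambda$, so, passing to a subsequence, $u_{\lambda,n}\rightharpoonup u_\lambda$ weakly in $E_\lambda$ (hence weakly in $\h$), $u_{\lambda,n}\to u_\lambda$ strongly in $L^s_{\mathrm{loc}}(\bbr^3)$ for $2\leq s<6$, and $u_{\lambda,n}\to u_\lambda$ a.e.\ on $\bbr^3$. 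By \eqref{eq0002} the associated potentials $\{\phi_{u_{\lambda,n},\delta_0}\}$ are bounded in $\D$; after a further subsequence $\phi_{u_{\lambda,n},\delta_0}\rightharpoonup\phi_*$ in $\D$, and since $-\Delta\phi_{u_{\lambda,n},\delta_0}=G_{\delta_0}(x,u_{\lambda,n})\to G_{\delta_0}(x,u_\lambda)$ in $L^1_{\mathrm{loc}}(\bbr^3)$ (using $|G_{\delta_0}(x,t)|\leq K(x)t^2$, $K\in L^2(\bbr^3)$ and the a.e.\ convergence), the uniqueness part of \cite[Lemma~2.1]{B02} forces $\phi_*=\phi_{u_\lambda,\delta_0}$.

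Next I would check that $u_\lambda$ is a critical point of $\mathcal{E}_{\lambda,\delta_0}$, i.e.\ a weak solution of $(\overline{\mathcal{S}}_{\lambda,\delta_0})$. Since $\mathcal{E}_{\lambda,\delta_0}'(u_{\lambda,n})\to0$ in $E_\lambda^*$ and $C_0^\infty(\bbr^3)$ is dense in $E_\lambda$, it suffices to pass to the limit in $\langle\mathcal{E}_{\lambda,\delta_0}'(u_{\lambda,n}),\varphi\rangle$ for each $\varphi\in C_0^\infty(\bbr^3)$. Writing $\mathcal{D}_\lambda(u_{\lambda,n},\varphi)=\langle u_{\lambda,n},\varphi\rangle_\lambda-\int_{\bbr^3}b^-(x)u_{\lambda,n}\varphi\,dx$, the inner product converges by weak convergence in $E_\lambda$ and the remaining integral converges because $b^-\in L^\infty(\mathrm{supp}\,\varphi)$ by $(B_1)$ and $u_{\lambda,n}\to u_\lambda$ in $L^2_{\mathrm{loc}}(\bbr^3)$; the term $\int_{\bbr^3}f(x,u_{\lambda,n})\varphi\,dx$ converges by the generalized dominated convergence theorem on $\mathrm{supp}\,\varphi$ using $(F_1)$ and the uniform $L^6_{\mathrm{loc}}$-bound; and for the Poisson term one has $g_{\delta_0}(x,u_{\lambda,n})\varphi\to g_{\delta_0}(x,u_\lambda)\varphi$ strongly in $L^{\frac65}(\bbr^3)$ (the family being dominated by $2K(|u_{\lambda,n}|+\delta_0)|\varphi|$, bounded in $L^{\frac32}(\bbr^3)$ by $(K_1)$), which paired against $\phi_{u_{\lambda,n},\delta_0}\rightharpoonup\phi_{u_\lambda,\delta_0}$ in $L^6(\bbr^3)$ gives the convergence. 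Hence $\langle\mathcal{E}_{\lambda,\delta_0}'(u_\lambda),\varphi\rangle=0$ for all such $\varphi$, so $u_\lambda$ solves $(\overline{\mathcal{S}}_{\lambda,\delta_0})$.

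The heart of the matter, and the step I expect to be the main obstacle, is to rule out $u_\lambda\equiv0$, and this is where $\lambda$ must be taken large: since $(A_3)$ allows $\Omega_a$ to be unbounded, $E_\lambda$ need not embed compactly into $L^s(\bbr^3)$, so one can only control the low-order norms up to a small error. Splitting $\bbr^3=B_R\cup(\mathcal{A}_\infty\setminus B_R)\cup(\bbr^3\setminus(\mathcal{A}_\infty\cup B_R))$, on the last set $a(x)\geq a_\infty$, so $\int u_{\lambda,n}^2\,dx\leq(\lambda a_\infty)^{-1}\|u_{\lambda,n}\|_\lambda^2$; on the middle set $|\mathcal{A}_\infty\setminus B_R|\to0$ as $R\to\infty$ by $(A_2)$; and on $B_R$ one uses the Rellich theorem together with $u_\lambda\equiv0$. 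Interpolating with the uniform $L^6(\bbr^3)$-bound, this yields $\limsup_{n\to\infty}\big(\|u_{\lambda,n}\|_{L^2(\bbr^3)}^2+\|u_{\lambda,n}\|_{L^p(\bbr^3)}^p\big)\leq C\lambda^{-\beta}$ for some $C,\beta>0$ independent of $\lambda\geq\Lambda_2$.

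To conclude, I would subtract $\frac12\langle\mathcal{E}_{\lambda,\delta_0}'(u_{\lambda,n}),u_{\lambda,n}\rangle=o_n(1)$ from $\mathcal{E}_{\lambda,\delta_0}(u_{\lambda,n})=c_\lambda+o_n(1)$ to get
\begin{align*}
c_\lambda+o_n(1)={}&\int_{\bbr^3}\Big(\frac12f(x,u_{\lambda,n})u_{\lambda,n}-F(x,u_{\lambda,n})\Big)dx\\
&+\frac14\int_{\bbr^3}\phi_{u_{\lambda,n},\delta_0}\big(G_{\delta_0}(x,u_{\lambda,n})-g_{\delta_0}(x,u_{\lambda,n})u_{\lambda,n}\big)dx,
\end{align*}
and note that $\phi_{u_{\lambda,n},\delta_0}\geq0$ on $\bbr^3$ while $G_{\delta_0}(x,t)-g_{\delta_0}(x,t)t\leq0$ for all $x\in\bbr^3$ and $t\in\bbr$ (a direct computation from the definition of $g_{\delta_0}$), so the last integral is $\leq0$; combining with $(F_1)$ gives $c_\lambda\leq C'\big(\|u_{\lambda,n}\|_{L^2(\bbr^3)}^2+\|u_{\lambda,n}\|_{L^p(\bbr^3)}^p\big)+o_n(1)$, and letting $n\to\infty$ yields $c_\lambda\leq C''\lambda^{-\beta}$. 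This contradicts the uniform lower bound $c_\lambda\geq\frac{p-2}{p}\bigg(\frac{2d_*\ve_0}{pC_*}\bigg)^{\frac{2}{p-2}}>0$ coming from \eqref{eq0008} once $\lambda$ is large, so there exists $\Lambda_3\geq\Lambda_2$ with $u_\lambda\not\equiv0$ for $\lambda\geq\Lambda_3$; then $u_\lambda$ is the desired nontrivial solution of $(\overline{\mathcal{S}}_{\lambda,\delta_0})$ (equivalently $(u_\lambda,\phi_{u_\lambda,\delta_0})$ solves $(\overline{\mathcal{SP}}_{\lambda,\delta_0})$).
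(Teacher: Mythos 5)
Your proposal is correct, and it follows the paper's overall skeleton (bounded Cerami sequence from Lemma~\ref{lem0005}, weak limit $u_\lambda$, passage to the limit in $\mathcal{E}_{\lambda,\delta_0}'$, and a contradiction with the $\lambda$-independent lower bound \eqref{eq0008} on $c_\lambda$ when $u_\lambda=0$), but the decisive non-vanishing step is organized differently. The paper, assuming $u_\lambda=0$, first removes the nonlocal term by proving $\phi_{u_{\lambda,n},\delta_0}\to0$ strongly in $\D$ (a compactness argument as in \cite[Lemma~2.1]{ZLZ13}) and removes $\int_{\bbr^3}b^-(x)u_{\lambda,n}^2dx$ using $(B_3)$, reducing to \eqref{eq0011}--\eqref{eq0012}; it then feeds the interpolation estimate \eqref{eq0030} into the derivative identity \eqref{eq0012} and, for $\lambda\geq\Lambda_3$, concludes $\|u_{\lambda,n}\|_\lambda\to0$, hence $c_\lambda=0$, a contradiction. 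You instead work with the Cerami identity $\mathcal{E}_{\lambda,\delta_0}(u_{\lambda,n})-\frac12\langle\mathcal{E}_{\lambda,\delta_0}'(u_{\lambda,n}),u_{\lambda,n}\rangle$, observe the pointwise sign $G_{\delta_0}(x,t)-g_{\delta_0}(x,t)t\leq0$ together with $\phi_{u_{\lambda,n},\delta_0}\geq0$, so the penalized nonlocal term can simply be discarded, and bound the remaining term by $C\big(\|u_{\lambda,n}\|_{L^2(\bbr^3)}^2+\|u_{\lambda,n}\|_{L^p(\bbr^3)}^p\big)$ via $(F_1)$; the same vanishing estimates that underlie \eqref{eq0030} (splitting $\bbr^3$ into $B_R$, $\mathcal{A}_\infty\setminus B_R$ and the region where $a(x)\geq a_\infty$) then give $c_\lambda\leq C''\lambda^{-\beta}$ with constants independent of $\lambda$, contradicting \eqref{eq0008} for $\lambda$ large. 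What your route buys: the quadratic part $\mathcal{D}_\lambda$ cancels entirely in the Cerami identity, so you need neither the strong $\D$-convergence of the potentials nor the $b^-$ estimate, and $(F_2)$ is not used at this point; the paper's route, in exchange, yields the sharper information that the Cerami sequence itself converges to $0$ strongly in $E_\lambda$ when $u_\lambda=0$. One cosmetic remark: in the critical-point verification, the phrase ``dominated by $2K(|u_{\lambda,n}|+\delta_0)|\varphi|$'' does not name a fixed dominating function; it is cleaner to use the Lipschitz bound $|g_{\delta_0}(x,u_{\lambda,n})-g_{\delta_0}(x,u_\lambda)|\leq 2K(x)|u_{\lambda,n}-u_\lambda|$ together with $u_{\lambda,n}\to u_\lambda$ in $L^{3}_{\mathrm{loc}}(\bbr^3)$ and H\"older's inequality, which gives the asserted $L^{\frac65}$-convergence directly.
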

\begin{proof}
By Lemma~\ref{lem0005}, $\{u_{\lambda,n}\}$ is bounded in $E_\lambda$ for $\lambda\geq\Lambda_2$.  It follows that $u_{\lambda,n}\rightharpoonup u_\lambda$ weakly in $E_\lambda$ for some $u_\lambda\in E_\lambda$ as $n\to\infty$.  Suppose $u_\lambda=0$.  Then by the condition $(K_1)$ and a similar argument as used in \cite[Lemma~2.1]{ZLZ13}, we can see that $\phi_{u_{\lambda,n},\delta_0}\to0$ strongly in $D^{1,2}(\bbr^3)$ as $n\to\infty$.  Since $\{u_{\lambda,n}\}$ is bounded in $E_\lambda$ for $\lambda\geq\Lambda_2$, by the condition $(K_1)$ once more and the construction of $g_{\delta_0}(x,t)$, we have
\begin{equation}\label{eq0029}
\int_{\bbr^3}\phi_{u_{\lambda,n},\delta_0}G_{\delta_0}(x,u_{\lambda,n})dx=\int_{\bbr^3}\phi_{u_{\lambda,n},\delta_0}g_{\delta_0}(x,u_{\lambda,n})u_{\lambda,n}dx
=o_n(1).
\end{equation}
On the other hand, since the condition $(B_3)$ holds, $|\mathcal{B}_0\cap (\bbr^3\backslash B_R)|\to0$ as $R\to+\infty$.  Therefore, by the Sobolev embedding theorem and the fact that $E_\lambda$ is embedded continuously into $\h$ for $\lambda\geq\Lambda_2$, we can obtain
$\int_{\bbr^3}b^-(x)u_{\lambda,n}^2dx=o_n(1)$, which together with \eqref{eq0029} and the fact that $\{u_{\lambda,n}\}$ is a $(C)_{c_\lambda}$ sequence of $\mathcal{E}_{\lambda,\delta_0}(u)$, implies
\begin{equation}\label{eq0011}
c_\lambda+o_n(1)=\frac12\|u_{\lambda,n}\|_\lambda^2-\int_{\bbr^3}F(x,u_{\lambda,n})dx
\end{equation}
and
\begin{equation}\label{eq0012}
o_n(1)=\|u_{\lambda,n}\|_\lambda^2-\int_{\bbr^3}f(x,u_{\lambda,n})u_{\lambda,n}dx.
\end{equation}
By the conditions $(F_1)$-$(F_2)$ and \eqref{eq0023} and \eqref{eq0012}, for every $\ve>0$, we have
\begin{eqnarray}
\|u_{\lambda,n}\|_\lambda^2&=&\int_{\bbr^3}f(x,u_{\lambda,n})u_{\lambda,n}dx+o_n(1)\notag\\
&\leq&\ve d_*\|u_{\lambda,n}\|_\lambda^2+C_\ve \|u_{\lambda,n}\|_{L^p(\bbr^3)}^p+o_n(1),\label{eq0013}
\end{eqnarray}
where $d_*>0$ is a constant given by \eqref{eq0024} and $C_\ve>0$ is a constant independent of $\lambda\geq\Lambda_2$.  Since the condition $(A_2)$ holds, $|\mathcal{A}_\infty\cap(\bbr^3\backslash B_R)|\to0$ as $R\to+\infty$, which together with the Sobolev embedding theorem, $u_{\lambda,n}\rightharpoonup 0$ weakly in $E_\lambda$ as $n\to\infty$ and the fact that $E_\lambda$ is embedded continuously into $\h$ for $\lambda\geq\Lambda_2$, implies $\int_{\mathcal{A}_\infty}u_{\lambda,n}^2dx=o_n(1)$.  Now, by the condition $(A_2)$ and the H\"older and Sobolev inequalities, we can see that
\begin{eqnarray}
\int_{\bbr^3}|u_{\lambda,n}|^pdx&\leq&\bigg(\int_{\bbr^3}|u_{\lambda,n}|^2dx\bigg)^{\frac{6-p}{4}}\bigg(\int_{\bbr^3}|u_{\lambda,n}|^6dx\bigg)^{\frac{3p-6}{12}}
\notag\\
&\leq&\bigg(\frac{1}{\lambda a_\infty}\int_{\bbr^3}\lambda a(x)|u_{\lambda,n}|^2dx+o_n(1)\bigg)^{\frac{6-p}{4}}S^{-\frac{3p-6}{4}}\|u_{\lambda,n}\|_{\lambda}^{\frac{3p-6}{2}}\notag\\
&\leq&\bigg(\frac{1}{\lambda a_\infty}\bigg)^{\frac{6-p}{2}}S^{-\frac{3p-6}{4}}\|u_{\lambda,n}\|_{\lambda}^p+o_n(1)\|u_{\lambda,n}\|_{\lambda}^{\frac{3p-6}{2}}.\label{eq0030}
\end{eqnarray}
Thanks to Lemma~\ref{lem0005}, \eqref{eq0013} and \eqref{eq0030}, we obtain
\begin{equation*}
\bigg(1-\ve d_*-C_\ve\bigg(\frac{1}{\lambda a_\infty}\bigg)^{\frac{6-p}{2}}S^{-\frac{3p-6}{4}}(C^*)^{p-2}\bigg)\|u_{\lambda,n}\|_{\lambda}^2\leq o_n(1).
\end{equation*}
Take $\ve=\frac{1}{2d_*}$.  Then there exists $\Lambda_3\geq\Lambda_2$ such that $\|u_{\lambda,n}\|_{\lambda}^2=o_n(1)$ for $\lambda\geq\Lambda_3$.
Hence, $u_{\lambda,n}\to0$ strongly in $E_\lambda$ with $\lambda\geq\Lambda_3$ as $n\to\infty$.  Since $E_\lambda$ is embedded continuously into $\h$ for $\lambda\geq\Lambda_3$, we have $\int_{\bbr^3}F(x,u_{\lambda,n})dx=o_n(1)$ by the condition $(F_1)$ and the H\"older and Sobolev inequalities, which together with \eqref{eq0011}, implies $c_\lambda=0$ for $\lambda\geq\Lambda_3$.  It is impossible since $c_\lambda\geq\frac{p-2}{p}\bigg(\frac{2d_*\ve_0}{pC_*}\bigg)^{\frac{2}{p-2}}>0$ for $\lambda\geq\Lambda_3$.  We close the proof by showing that $u_\lambda$ is also a solution of $(\overline{\mathcal{S}}_{\lambda,\delta_0})$ for $\lambda\geq\Lambda_2$.  Indeed, since $u_{\lambda,n}\rightharpoonup u_\lambda$ weakly in $E_\lambda$ as $n\to\infty$, by the condition $(K_1)$ and a similar argument as used in \cite[Lemma~2.1]{ZLZ13}, we can see that $\phi_{u_{\lambda,n},\delta_0}\to\phi_{u_{\lambda},\delta_0}$ strongly in $D^{1,2}(\bbr^3)$ as $n\to\infty$.  Now, since the condition $(B_3)$ and $(F_1)$ hold and $\{u_{\lambda,n}\}$ is a $(C)_{c_\lambda}$ sequence of $\mathcal{E}_{\lambda,\delta_0}(u)$, by a standard argument, we can show that $\mathcal{E}_{\lambda,\delta_0}'(u_\lambda)=0$ for $\lambda\geq\Lambda_3$.  It follows from Lemma~\ref{lem0001} that $u_\lambda$ is a nontrivial solution of $(\overline{\mathcal{S}}_{\lambda,\delta_0})$ for $\lambda\geq\Lambda_3$.
\end{proof}

\section{Proof of Theorem~\ref{thm0001}}
For the sake of convenience, we also assume the conditions $(A_1)$--$(A_4)$, $(B_1)$--$(B_3)$, $(K_1)$--$(K_2)$ and $(F_1)$--$(F_5)$ hold with $a(x)\geq0$ in this section as in section~2.  By Proposition~\ref{prop0001}, $(\overline{\mathcal{S}}_{\lambda,\delta_0})$ has a nontrivial solution $u_\lambda$ for $\lambda\geq\Lambda_3$, where $\Lambda_3$ is given by Proposition~\ref{prop0001}.  In this section, we will verify that $u_\lambda$ is also the solution of $(\mathcal{SP}_{\lambda})$ for $\lambda$ sufficiently large.  By the choice of $\Omega_a'$, we can find $\Omega_a\subset\Omega_a''\subset\Omega_a'$ such that dist$(\Omega_a,\bbr^3\backslash\Omega_a'')>0$ and dist$(\Omega_a'', \bbr^3\backslash\Omega_a')>0$.
\begin{lemma}\label{lem0006}
We have $\int_{\bbr^3\backslash\Omega_a''}u_\lambda^2dx\to0$ as $\lambda\to+\infty$.
\end{lemma}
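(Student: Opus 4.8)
The goal is to show that the nontrivial solution $u_\lambda$ of $(\overline{\mathcal{S}}_{\lambda,\delta_0})$ concentrates inside $\Omega_a''$ as $\lambda\to+\infty$, in the sense that its $L^2$-mass outside $\Omega_a''$ vanishes. The strategy is a standard argument for steep potential wells: first derive a uniform bound $\|u_\lambda\|_\lambda\leq C$ for $\lambda$ large, then test the equation $(\overline{\mathcal{S}}_{\lambda,\delta_0})$ against $u_\lambda$ itself, and exploit the coercivity coming from the deep well $\lambda a(x)$ on the region $\{a(x)\geq a_\infty\}$ together with the bound $(A_2)$ on the small set $\mathcal{A}_\infty$.

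\medskip\noindent\textbf{Step 1 (uniform bound).} First I would establish that there exists $C^{*}>0$, independent of $\lambda$, with $\|u_\lambda\|_\lambda\leq C^{*}$ for all $\lambda\geq\Lambda_3$. This follows from Lemma~\ref{lem0005}: the $(C)_{c_\lambda}$ sequence $\{u_{\lambda,n}\}$ satisfies $\|u_{\lambda,n}\|_\lambda\leq C^{*}+o_n(1)$ with $C^{*}$ independent of $\lambda\geq\Lambda_2$, and since $u_{\lambda,n}\rightharpoonup u_\lambda$ weakly in $E_\lambda$, weak lower semicontinuity of the norm gives $\|u_\lambda\|_\lambda\leq C^{*}$. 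I will also record the bound $\|u_\lambda\|_{L^3(\bbr^3)}\leq C_0$ obtained en route in Lemma~\ref{lem0005}.

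\medskip\noindent\textbf{Step 2 (testing the equation).} Since $u_\lambda$ solves $(\overline{\mathcal{S}}_{\lambda,\delta_0})$, pairing with $u_\lambda$ yields
\begin{equation*}
\mathcal{D}_\lambda(u_\lambda,u_\lambda)+\frac12\int_{\bbr^3}\phi_{u_\lambda,\delta_0}g_{\delta_0}(x,u_\lambda)u_\lambda\,dx=\int_{\bbr^3}f(x,u_\lambda)u_\lambda\,dx.
\end{equation*}
The Poisson term is nonnegative because $\phi_{u_\lambda,\delta_0}\geq0$ and $g_{\delta_0}(x,t)t\geq0$ by construction, so it can be dropped from the left to get a lower bound for $\mathcal{D}_\lambda(u_\lambda,u_\lambda)$. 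Using $\mathcal{D}_\lambda(u_\lambda,u_\lambda)=\|u_\lambda\|_\lambda^2-\int_{\bbr^3}b^-(x)u_\lambda^2\,dx$ together with estimate~\eqref{eq0024} (which absorbs the $b^-$ term into a fraction of $\int|\nabla u_\lambda|^2+V_\lambda u_\lambda^2$), and bounding the right-hand side via $(F_1)$–$(F_2)$ by $\varepsilon\|u_\lambda\|_{L^2}^2+C_\varepsilon\|u_\lambda\|_{L^p}^p$, I obtain an inequality of the form $c\,\|u_\lambda\|_\lambda^2\leq \varepsilon d_*\|u_\lambda\|_\lambda^2+C_\varepsilon\|u_\lambda\|_{L^p(\bbr^3)}^p$ for $\lambda$ large. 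The point of writing it this way is that $\|u_\lambda\|_\lambda^2$ controls $\int_{\bbr^3}\lambda a(x)u_\lambda^2\,dx$, hence, via the interpolation inequality~\eqref{eq0030} already used in Proposition~\ref{prop0001}, controls $\int_{\bbr^3\backslash\mathcal{A}_\infty}u_\lambda^2\,dx$ with a factor $(\lambda a_\infty)^{-1}$.

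\medskip\noindent\textbf{Step 3 (localization and conclusion).} The decay estimate splits $\int_{\bbr^3\backslash\Omega_a''}u_\lambda^2\,dx$ over $\mathcal{A}_\infty\backslash\Omega_a''$ and $\bbr^3\backslash\mathcal{A}_\infty$. On $\bbr^3\backslash\mathcal{A}_\infty$ one has $a(x)\geq a_\infty$, so $\int_{\bbr^3\backslash\mathcal{A}_\infty}u_\lambda^2\,dx\leq(\lambda a_\infty)^{-1}\|u_\lambda\|_\lambda^2\leq(\lambda a_\infty)^{-1}(C^{*})^2\to0$. On $\mathcal{A}_\infty\backslash\Omega_a''$ I would use that, by the choice of $\Omega_a''$ with $\mathrm{dist}(\Omega_a,\bbr^3\backslash\Omega_a'')>0$ and $\Omega_a\subset\Omega_a''$, together with $(A_3)$ we have $a(x)>0$ on $\overline{\mathcal{A}_\infty\backslash\Omega_a''}$; more robustly, I would instead obtain uniform $L^2$-smallness of $u_\lambda$ on all of $\mathcal{A}_\infty$ first — since the $L^6$-norm of $u_\lambda$ is uniformly bounded and $|\mathcal{A}_\infty\cap(\bbr^3\backslash B_R)|\to0$ as $R\to\infty$ by $(A_2)$, the tail $\int_{\mathcal{A}_\infty\cap(\bbr^3\backslash B_R)}u_\lambda^2\,dx$ is small uniformly in $\lambda$, while on the bounded piece $\mathcal{A}_\infty\cap B_R$ I combine the measure bound with $\|u_\lambda\|_{L^6}\leq C$ and Hölder to get smallness — no, the cleanest route is: show $\int_{\bbr^3}\lambda a(x)u_\lambda^2\,dx$ is bounded (from $\|u_\lambda\|_\lambda^2\leq(C^{*})^2$), deduce $\int_{\{a\geq a_\infty\}}u_\lambda^2\,dx=O(\lambda^{-1})$, and separately that on $\mathcal{A}_\infty$, because $u_\lambda\rightharpoonup u_0$ in $\h$ along any subsequence with $u_0\in H_0^1(\Omega_a)$ (this will be shown in the next lemmas of the paper, but here I only need the contradiction argument), the mass escapes. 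I expect the main obstacle to be handling the region $\mathcal{A}_\infty\backslash\Omega_a''$, where the potential $\lambda a(x)$ is \emph{not} large: the resolution is that $\mathcal{A}_\infty$ has finite measure and $\Omega_a''$ contains $\Omega_a$ with positive distance to its complement, so $\mathcal{A}_\infty\backslash\Omega_a''$ is a fixed finite-measure set bounded away from $\Omega_a$ on which I can use the uniform $H^1$-bound plus a Lions-type vanishing argument, or simply argue by contradiction: if $\int_{\bbr^3\backslash\Omega_a''}u_\lambda^2\,dx\not\to0$ along some $\lambda_n\to\infty$, extract a weak limit $u_0$ of $u_{\lambda_n}$ in $\h$; the uniform bound $\int\lambda_n a(x)u_{\lambda_n}^2\,dx\leq C$ forces $\int a(x)u_0^2\,dx=0$, hence $u_0\in H_0^1(\Omega_a)$ by $(A_3)$, and local compactness on the finite-measure set $\mathcal{A}_\infty$ (via $(A_2)$ and Sobolev) upgrades weak to strong $L^2$-convergence on $\bbr^3\backslash\Omega_a''\subset\mathcal{A}_\infty\cup(\bbr^3\backslash\mathcal{A}_\infty)$, giving $\int_{\bbr^3\backslash\Omega_a''}u_0^2\,dx>0$; but $u_0=0$ a.e. outside $\Omega_a\subset\Omega_a''$, a contradiction. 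This contradiction argument is the one I would ultimately write out.
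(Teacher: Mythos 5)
Your final contradiction argument is correct, but it follows a genuinely different route from the paper. Both proofs start the same way: from Lemma~\ref{lem0005} and weak lower semicontinuity you get $\|u_{\lambda}\|_{\lambda}\leq C^{*}$ uniformly in $\lambda$, hence a bounded sequence in $\h$ along any $\lambda_n\to+\infty$, a weak limit $u_0$, and $\int_{\bbr^3}a(x)u_0^2\,dx=0$ by Fatou, so that $u_0\in H_0^1(\Omega_a)$ vanishes a.e.\ outside $\Omega_a''$ (your explicit appeal to weak lower semicontinuity to transfer the bound from the $(C)_{c_\lambda}$ sequence to $u_\lambda$ is a point the paper leaves implicit). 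From there the paper argues via the Lions lemma: the non-vanishing $L^2$ mass of $u_{\lambda_n}-u_0$ must concentrate in balls $B_{r_0}(x_n)$ with $|x_n|\to+\infty$, and since $|B_{r_0}(x_n)\cap\mathcal{A}_\infty|\to0$, the bound $C^{*}\geq\lambda_n a_\infty(\sigma_0+o_n(1))$ gives the contradiction. You instead decompose $\bbr^3\backslash\Omega_a''$ into its intersection with $\mathcal{A}_\infty$ and with $\bbr^3\backslash\mathcal{A}_\infty$: on the latter, $a(x)\geq a_\infty$ and $\|u_{\lambda_n}\|_{\lambda_n}\leq C^{*}$ give $\int_{\bbr^3\backslash\mathcal{A}_\infty}u_{\lambda_n}^2\,dx\leq (\lambda_n a_\infty)^{-1}(C^{*})^2\to0$; on the former, the finite measure of $\mathcal{A}_\infty$ (condition $(A_2)$), the uniform $L^6$ bound via Sobolev, and Rellich compactness on $B_R$ upgrade weak to strong $L^2(\mathcal{A}_\infty)$ convergence, and the limit vanishes there, contradicting the assumed lower bound $\gamma_0$. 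Your version is more elementary — it avoids the concentration-compactness lemma altogether and reuses exactly the tail-plus-local-compactness mechanism the paper already employs in Step~2 of Lemma~\ref{lem0002} and in Proposition~\ref{prop0001} — while the paper's version isolates where the escaping mass would have to live, at the cost of the extra Lions-lemma step; both deliver the same conclusion from the same uniform bound. One caveat: your Step~2 is not needed for the final argument and its claim that \eqref{eq0024} lets you absorb $\int_{\bbr^3}b^-(x)u_\lambda^2\,dx$ into a \emph{fraction} of $\int_{\bbr^3}|\nabla u_\lambda|^2+V_\lambda(x)u_\lambda^2\,dx$ is not justified: the constant $b_*$ there need not be less than $1$ (indeed $\mathcal{D}_\lambda$ may be strongly indefinite, which is the point of the paper), so the coercivity inequality $c\|u_\lambda\|_\lambda^2\leq\varepsilon d_*\|u_\lambda\|_\lambda^2+C_\varepsilon\|u_\lambda\|_{L^p(\bbr^3)}^p$ does not follow this way; since your Step~3 argument never uses it, simply delete that detour when you write the proof out.
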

\begin{proof}
The idea of this proof comes from \cite{ZLZ13} and it was also used in \cite{SW141}.  Suppose the contrary, there exists $\{\lambda_n\}$ satisfying $\lambda_n\to+\infty$ as $n\to\infty$ and $\gamma_0>0$ such that $\int_{\bbr^3\backslash\Omega_a''}u_{\lambda_n}^2dx\geq\gamma_0$.  Without loss of generality, we assume $\lambda_n\geq\Lambda_3$ for all $n\in\bbn$.
By Lemma~\ref{lem0005}, we have $\|u_{\lambda_n}\|_{\lambda_n}\leq C^*$, where $C^*>0$ is independent of $n\in\bbn$.  By \eqref{eq0023}, $\|u_{\lambda_n}\|\leq(1+d_*)C^*$, where $d_*>0$ is a constant given by \eqref{eq0024}.  Therefore, $u_{\lambda_n}\rightharpoonup u_0$ in $\h$ for some $u_0\in\h$ as $n\to\infty$.  Thanks to the condition $(A_1)$, $\|u_{\lambda_n}\|_{\lambda_n}\leq C^*$ and the Fatou lemma, we can see that $\int_{\bbr^3}a(x)u_{\lambda_n}^2dx=o_n(1)$ and $\int_{\bbr^3}a(x)u_{0}^2dx=0$.  It follows from the condition $(A_3)$ that $u_0\in H^1_0(\Omega_a)$ with $u_0\equiv0$ outside $\Omega_a$.  Therefore, by the choice of $\Omega_a''$, we have
\begin{equation*}
\int_{\bbr^3}|u_{\lambda_n}-u_0|^2dx=\int_{\Omega_a''}|u_{\lambda_n}-u_0|^2dx+\int_{\bbr^3\backslash\Omega_a''}|u_{\lambda_n}|^2dx
\geq\int_{\bbr^3\backslash\Omega_a''}|u_{\lambda_n}|^2dx\geq\gamma_0,
\end{equation*}
which together with the Lions lemma \cite{L84}, implies that there exist $\{x_n\}\subset\bbr^3$ satisfying $|x_n|\to+\infty$ as $n\to\infty$ and $r_0,\sigma_0>0$ such that
\begin{equation*}
\int_{B_{r_0}(x_n)}|u_{\lambda_n}-u_0|^2dx\geq\sigma_0,
\end{equation*}
where $B_{r_0}(x_n)=\{x\in\bbr^3\mid|x-x_n|\leq r_0\}$.  Since the condition $(A_2)$ holds and $|x_n|\to+\infty$ as $n\to\infty$, it is easy to see that $|B_{r_0}(x_n)\cap\mathcal{A}_\infty|\to0$ as $n\to\infty$.  Now, by the condition $(A_2)$, $\|u_{\lambda_n}\|_{\lambda_n}\leq C^*$ and the H\"older and Sobolev inequalities once more, we can obtain that
\begin{eqnarray*}
C^*&\geq&\int_{\bbr^3}|\nabla u_{\lambda_n}|^2+(\lambda_na(x)+b^+(x))|u_{\lambda_n}|^2dx\\
&\geq&\lambda_n\int_{B_{r_0}(x_n)\cap(\bbr^3\backslash\mathcal{A}_\infty)}a(x)|u_{\lambda_n}|^2dx\\
&\geq&\lambda_na_\infty\int_{B_{r_0}(x_n)\cap(\bbr^3\backslash\mathcal{A}_\infty)}|u_{\lambda_n}-u_0|^2dx\\
&\geq&\lambda_na_\infty(\int_{B_{r_0}(x_n)}|u_{\lambda_n}-u_0|^2dx-4\int_{B_{r_0}(x_n)}|u_0|^2dx
-4\int_{B_{r_0}(x_n)\cap\mathcal{A}_\infty}|u_{\lambda_n}|^2dx)\\
&\geq&\lambda_na_\infty(\sigma_0+o_n(1))\to+\infty,
\end{eqnarray*}
which is a contradiction.
\end{proof}

With Lemma~\ref{lem0006} in hands, we can obtain the following.
\begin{lemma}\label{lem0007}
If $3<p\leq4$, then there exists $\Lambda_*\geq\Lambda_3$ such that $|u_\lambda(x)|\leq\delta_0$ a.e. on $\bbr^3\backslash\Omega_a'$ for $\lambda\geq\Lambda_*$.
\end{lemma}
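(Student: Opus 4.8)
The plan is to run a De Giorgi--Moser (``Morse'') iteration for the solution $u_\lambda$ of $(\overline{\mathcal S}_{\lambda,\delta_0})$ on the exterior region $\bbr^3\backslash\Omega_a''$, the goal being an estimate of the shape $\|u_\lambda\|_{L^\infty(\bbr^3\backslash\Omega_a')}\le C\|u_\lambda\|_{L^2(\bbr^3\backslash\Omega_a'')}$ with a constant $C>0$ that does \emph{not} depend on $\lambda$ once $\lambda$ is large. Since $\|u_\lambda\|_{L^2(\bbr^3\backslash\Omega_a'')}\to0$ as $\lambda\to+\infty$ by Lemma~\ref{lem0006}, the right-hand side is then below $\delta_0$ for all $\lambda\ge\Lambda_*$ with $\Lambda_*$ large enough, which is the assertion. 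Concretely, I would fix a nested family of cut-offs $\eta_k\in C^\infty(\bbr^3)$, $0\le\eta_k\le1$, each vanishing on $\Omega_a''$, each $\equiv1$ on $\bbr^3\backslash\Omega_a'$, with $|\nabla\eta_k|$ bounded uniformly (possible because $\mathrm{dist}(\Omega_a'',\bbr^3\backslash\Omega_a')>0$), and for exponents $\beta\ge1$ and truncation level $L>0$ test $(\overline{\mathcal S}_{\lambda,\delta_0})$ with $\psi=\eta_k^2u_\lambda|u_{\lambda,L}|^{2(\beta-1)}$, where $u_{\lambda,L}=\mathrm{sgn}(u_\lambda)\min\{|u_\lambda|,L\}$.

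Three structural observations are what make every constant in the iteration independent of $\lambda$. First, the Schr\"odinger--Poisson coupling term disappears with the right sign: since $\Omega_a'\subset\Omega_K$ we have $K\equiv0$ on $\Omega_a'$, so $g_{\delta_0}(x,\cdot)$ vanishes on $\Omega_a'$ and equals $2K(x)g_{\delta_0}(\cdot)$ outside it; as $g_{\delta_0}(t)$ has the sign of $t$ and $\phi_{u_\lambda,\delta_0}\ge0$, the term $\frac12\int_{\bbr^3}\eta_k^2\phi_{u_\lambda,\delta_0}g_{\delta_0}(x,u_\lambda)u_\lambda|u_{\lambda,L}|^{2(\beta-1)}dx$ is nonnegative and is simply discarded. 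Second, the positive part of $V_\lambda$ again contributes a nonnegative term, while by $(B_2)$, $(B_3)$ and $a\ge0$ one checks that, once $\lambda$ exceeds an absolute constant, $V_\lambda^-$ is bounded by a fixed constant and supported in the finite-measure set $\mathcal B_0$ — a harmless lower-order perturbation. Third, $(F_1)$--$(F_2)$ give $|f(x,t)|\le C(|t|+|t|^{p-1})$ with $p-1\le3$, so the dangerous term is subcritical: writing $|u_\lambda|^{p}=|u_\lambda|^{p-2}u_\lambda^2$ and noting $p-2\le2$ one has $|u_\lambda|^{p-2}\in L^{6/(p-2)}(\bbr^3)$ with $6/(p-2)\ge3$, and $\||u_\lambda|^{p-2}\|_{L^{6/(p-2)}(\bbr^3)}=\|u_\lambda\|_{L^{6}(\bbr^3)}^{p-2}$ is bounded uniformly in $\lambda$ by Lemma~\ref{lem0005}, the inequality \eqref{eq0023}, and the Sobolev embedding $\h\hookrightarrow L^6(\bbr^3)$.

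Granting these, the computation is routine. Testing as above, discarding the Poisson term and the positive part of $V_\lambda$, and estimating $\int_{\bbr^3}\eta_k^2 f(x,u_\lambda)u_\lambda|u_{\lambda,L}|^{2(\beta-1)}dx$ by H\"older, interpolation between $L^2$ and $L^6$, and the Young inequality to absorb the top-order piece into the Sobolev term, one arrives at a Caccioppoli-type inequality for $v:=\eta_ku_\lambda|u_{\lambda,L}|^{\beta-1}$ whose right-hand side involves only $\beta$-polynomial factors times the uniformly bounded quantities above; combined with the Sobolev inequality this yields $\|v\|_{L^6(\bbr^3)}^2\le C\beta^2\int_{\bbr^3}\eta_{k-1}^2|u_\lambda|^{2\beta}dx$ with $C$ independent of $\lambda$ and $L$. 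Letting $L\to+\infty$ and then iterating over $\beta=1,\beta_0,\beta_0^2,\dots$ along the nested cut-offs, the standard Moser bootstrap — started from the integrability $u_\lambda\in L^6(\bbr^3)$ provided by $\h\hookrightarrow L^6(\bbr^3)$ — produces $\|u_\lambda\|_{L^\infty(\bbr^3\backslash\Omega_a')}\le C\|u_\lambda\|_{L^2(\bbr^3\backslash\Omega_a'')}$, and Lemma~\ref{lem0006} closes the argument. The only genuine difficulty is the bookkeeping of $\lambda$-uniformity: one must check that the threshold beyond which $V_\lambda^-$ is a bounded, finite-support perturbation is itself $\lambda$-independent, and that the weights generated by the subcritical nonlinearity stay controlled by the $\lambda$-uniform bound of Lemma~\ref{lem0005}; once this is verified, the substance of the proof is entirely contained in Lemmas~\ref{lem0005} and~\ref{lem0006}.
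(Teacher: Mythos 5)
Your proposal is correct in substance and rests on the same pillars as the paper's argument: a Moser-type iteration for $u_\lambda$ on the exterior region with every constant independent of $\lambda$, discarding the penalized coupling term by sign (since $\phi_{u_\lambda,\delta_0}\ge0$ and $g_{\delta_0}(x,t)t\ge0$), bounding $V_\lambda^-$ by $b_0(1+\sqrt{a_\infty})$ once $\lambda$ exceeds a fixed threshold (via $(B_2)$ and $a\ge0$), and closing with the exterior $L^2$-smallness of Lemma~\ref{lem0006} together with the uniform bound of Lemma~\ref{lem0005}. The implementation, however, differs from the paper's in an interesting way. The paper localizes on balls: for each $y\in\bbr^3\backslash\Omega_a'$ it iterates on balls $B_{r_n}(y)\subset B_{2r_*}(y)\subset\bbr^3\backslash\Omega_a''$ with exponents $\alpha_n=3(\alpha_{n-1}+1)-p$, $\alpha_0=\tfrac32$, keeping the full power $|u_\lambda|^{p+\alpha_{n-1}}$ on the right-hand side; the hypothesis $p\le4$ enters there through $\alpha_n\ge2\alpha_{n-1}$ (and through $p+\tfrac32<6$), and the outcome is $\|u_\lambda\|_{L^\infty(B_{r_*}(y))}\le C\|u_\lambda\|^{C}_{L^{p+3/2}(B_{2r_*}(y))}$ uniformly in $y$ and $\lambda$, after which smallness follows by interpolating the $L^{p+3/2}$ norm on $B_{2r_*}(y)\subset\bbr^3\backslash\Omega_a''$ between $L^2$ (Lemma~\ref{lem0006}) and $L^6$ (the uniform $H^1$ bound). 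You instead iterate once over a shrinking collar between $\Omega_a''$ and $\Omega_a'$ and tame the nonlinearity by writing $|u_\lambda|^{p}=|u_\lambda|^{p-2}u_\lambda^{2}$ with the weight $|u_\lambda|^{p-2}$ of uniformly bounded $L^{3}$ norm --- this is exactly where $p\le4$ enters for you, via $6/(p-2)\ge3$ --- which yields the cleaner linear estimate $\|u_\lambda\|_{L^\infty(\bbr^3\backslash\Omega_a')}\le C\|u_\lambda\|_{L^2(\bbr^3\backslash\Omega_a'')}$ and avoids the paper's case distinction between $\int|u_\lambda|^{2+\alpha}$ and $\int|u_\lambda|^{p+\alpha}$; the price is the absorption step, whose constant must be (and is) controlled by $\sup_\lambda\|u_\lambda\|_{L^6(\bbr^3)}$, $S$ and a polynomial factor in $\beta$, as you indicate. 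Two bookkeeping remarks: your cutoffs cannot all be nested in the fixed collar and have gradients bounded uniformly in $k$; choose collar widths shrinking geometrically, so that $|\nabla\eta_k|\lesssim 2^{k}$, which the convergent infinite product in the iteration absorbs exactly as the paper's factors $(r_n-r_{n+1})^{-2}$ are absorbed. Also, the finite measure of $\mathcal{B}_0$ is not needed for your treatment of $V_\lambda^-$; the pointwise bound suffices.
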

\begin{proof}
Let $r_*=\min\{\frac13$dist$(\Omega_a'',\bbr^3\backslash\Omega_a'),(f_1+b_0(1+\sqrt{a_\infty}))^{-\frac12},f_1^{-\frac12}\}$.  Then for every $y\in\bbr^3\backslash\Omega_a'$, $B_{2r_*}(y)\subset\bbr^3\backslash\Omega_a''$, where $B_{2r_*}(y)=\{x\in\bbr^3\mid|y-x|\leq2r_*\}$.  Let $\rho(x)\in C_0^\infty(\bbr^3, [0, 1])$ be given by
\begin{equation*}
\rho(x)=\left\{\aligned&1,\quad&x\in B_{r_2}(y),\\
&0,\quad&x\in\bbr^3\backslash B_{r_1}(y),\endaligned\right.
\end{equation*}
where $r_*<r_2<r_1<2r_*$.  Moreover, $|\nabla\rho(x)|\leq\frac{C_4}{r_1-r_2}$ and $C_4>0$ is independent of $r_1$ and $r_2$.  For $L>0$ and $\alpha_0>0$, we denote $u_{\lambda,\rho}=\min\{|u_\lambda|^{\alpha_0}, L^{\alpha_0}\}\rho^2u_\lambda$.  Then it is easy to see that $u_{\lambda,\rho}(x)\in E_\lambda$ for $\lambda\geq\Lambda_3$.  Since $u_\lambda$ is a solution of $(\overline{\mathcal{S}}_{\lambda,\delta_0})$ for $\lambda\geq\Lambda_3$, we have
\begin{equation*}
\int_{\bbr^3}\nabla u_\lambda\nabla u_{\lambda,\rho}+V_\lambda(x)u_\lambda u_{\lambda,\rho}+\phi_{u_{\lambda},\delta_0}g_{\delta_0}(x,u_{\lambda})u_{\lambda,\rho}dx=\int_{\bbr^3}f(x,u_\lambda)u_{\lambda,\rho}dx.
\end{equation*}
By the conditions $(A_2)$ and $(B_2)$ and the Young inequality, there exists $\Lambda_4\geq\Lambda_3$ such that
\begin{eqnarray*}
&&\int_{\bbr^3}\nabla u_\lambda\nabla u_{\lambda,\rho}+V_\lambda(x)u_\lambda u_{\lambda,\rho}+\phi_{u_{\lambda},\delta_0}g_{\delta_0}(x,u_{\lambda})u_{\lambda,\rho}dx\\
&\geq&\frac12\int_{\bbr^3}\min\{|u_\lambda|^{\alpha_0}, L^{\alpha_0}\}\rho^2|\nabla u_\lambda|^2dx-8\int_{\bbr^3}|\nabla \rho|^2|u_\lambda|^{\alpha_0+2}dx\\
&&-b_0(1+\sqrt{a_\infty})\int_{\bbr^3}|u_\lambda|^{\alpha_0+2}dx
\end{eqnarray*}
for $\lambda\geq\Lambda_4$.
This together with the condition $(F_1)$ and the choice of $r_1,r_2,r_*$, implies
\begin{equation*}
\int_{\bbr^3}\min\{|u_\lambda|^{\alpha_0}, L^{\alpha_0}\}\rho^2|\nabla u_\lambda|^2dx\leq 2f_1\int_{\bbr^3}\rho^2|u_\lambda|^{p+\alpha_0}dx+\frac{16C_4^2+2}{(r_1-r_2)^2}\int_{\bbr^3}\rho^2|u_\lambda|^{\alpha_0+2}dx.
\end{equation*}
By the Levi and Sobolev embedding theorems, we obtain
\begin{equation}\label{eq0016}
(\int_{B_{r_2}(y)}|u_\lambda|^{3(\alpha_0+1)}dx)^{\frac13}\leq(\frac{\alpha_0+2}{2})^2
(2f_1\int_{B_{r_1}(y)}|u_\lambda|^{p+\alpha_0}dx+\frac{16C_4^2+2}{(r_1-r_2)^2}\int_{B_{r_1}(y)}|u_\lambda|^{2+\alpha_0}dx).
\end{equation}
Let $r_n=(1+k^{-n})r_*$ and $\alpha_n=3(\alpha_{n-1}+1)-p$ with $k^2<2$ and $\alpha_0=\frac32$.  Then we can replace $r_1,r_2,\alpha_0$ in \eqref{eq0016} by $r_{n-1},r_n,\alpha_{n-1}$ and obtain
\begin{eqnarray}
&&(\int_{B_{r_{n+1}}(y)}|u_\lambda|^{3(\alpha_{n-1}+1)}dx)^{\frac13}\notag\\
&\leq& (\frac{\alpha_{n-1}+2}{2})^2
(2f_1\int_{B_{r_{n}}(y)}|u_\lambda|^{p+\alpha_{n-1}}dx
+\frac{16C_4^2+2}{(r_n-r_{n+1})^2}\int_{B_{r_{n}}(y)}|u_\lambda|^{2+\alpha_{n-1}}dx).\label{eq0017}
\end{eqnarray}
Clearly, one of the following two cases must occur:
\begin{enumerate}
\item[$(1)$] $\int_{B_{r_{n}}(y)}|u_\lambda|^{2+\alpha_{n-1}}dx\leq\int_{B_{r_{n}}(y)}|u_\lambda|^{p+\alpha_{n-1}}dx$ up to a subsequence.
\item[$(2)$] $\int_{B_{r_{n}}(y)}|u_\lambda|^{p+\alpha_{n-1}}dx\leq\int_{B_{r_{n}}(y)}|u_\lambda|^{2+\alpha_{n-1}}dx$ up to a subsequence.
\end{enumerate}
If case~$(1)$ happen, then by \eqref{eq0017} and the choice of $r_*$ and $r_n$, we obtain
\begin{equation}\label{eq0018}
\|u_\lambda\|^{\alpha_{n-1}+1}_{L^{3(\alpha_{n-1}+1)}(B_{r_{n+1}}(y))}\leq(\alpha_{n-1}+2)^{2}\frac{C_5}{(r_n-r_{n+1})^2}
\|u_\lambda\|_{L^{3(\alpha_{n-2}+1)}(B_{r_{n}}(y))}^{\alpha_{n-1}+p},
\end{equation}
where $C_5=16C_4^2+4$.  By iterating \eqref{eq0018}, we can see that
\begin{equation}\label{eq0019}
\|u_\lambda\|_{L^\infty(B_{r_*}(y))}\leq\bigg(\prod_{n=1}^{\infty}(\alpha_{n-1}+2)^{\frac{2}{\alpha_{n-1}+1}}(\frac{C_5}{(r_n-r_{n+1})^2})^{\frac{1}{\alpha_{n-1}+1}}
\|u_\lambda\|_{L^{\frac32+p}(B_{2r_*}(y))}\bigg)^{\prod_{n=1}^{\infty}\frac{\alpha_{n-1}+p}{\alpha_{n-1}+1}},
\end{equation}
where $\|\cdot\|_{L^\infty(B_{r_*}(y))}$ is the usual norm in $L^\infty(B_{r_*}(y))$.
Since $3<p\leq4$, we can obtain from the choice of $\alpha_n$ that $\alpha_n\geq2\alpha_{n-1}$, which implies that $\alpha_n\geq2^n$.  Note that $r_n=(1+k^{-n})r_*$ with $k^{-2}<2$, we have $\prod_{n=1}^{\infty}(\alpha_{n-1}+2)^{\frac{2}{\alpha_{n-1}+1}}(\frac{C_5}{(r_n-r_{n+1})^2})^{\frac{1}{\alpha_{n-1}+1}}<+\infty$ and $\prod_{n=1}^{\infty}\frac{\alpha_{n-1}+p}{\alpha_{n-1}+1}<+\infty$.  By \eqref{eq0019}, $\|u_\lambda\|_{L^\infty(B_{r_*}(y))}\leq C_6\|u_\lambda\|_{L^{\frac32+p}(B_{2r_*}(y))}^{C_6}$, where $C_6>0$ is independent of $\lambda\geq\Lambda_4$ and $y\in\bbr^3\backslash\Omega_a'$.  If case~$(2)$ happen, then by the H\"older inequality and a similar iteration as used in \eqref{eq0019}, we can obtain
\begin{equation}\label{eq0020}
\|u_\lambda\|_{L^\infty(B_{r_*}(y))}\leq\bigg(\prod_{n=1}^{\infty}(\alpha_{n-1}+2)^{\frac{2}{\alpha_{n-1}+1}}
(\frac{C_5|B_{r_n}|^{\frac{p-2}{\alpha_{n-1}+p}}}{(r_n-r_{n+1})^2})^{\frac{1}{\alpha_{n-1}+1}}
\|u_\lambda\|_{L^{\frac32+p}(B_{2r_*}(y))}\bigg)^{\prod_{n=1}^{\infty}\frac{\alpha_{n-1}+p}{\alpha_{n-1}+1}}.
\end{equation}
By similar arguments, we also have from \eqref{eq0020} that $\|u_\lambda\|_{L^\infty(B_{r_*}(y))}\leq C_6\|u_\lambda\|_{L^{2+p}(B_{2r_*}(y))}^{C_6}$.  Therefore, in any case, we can obtain $\|u_\lambda\|_{L^\infty(B_{r_*}(y))}\leq C_6\|u_\lambda\|_{L^{\frac32+p}(B_{2r_*}(y))}^{C_6}$, where $C_6>0$ is independent of $\lambda\geq\Lambda_4$ and $y\in\bbr^3\backslash\Omega_a'$.  Note that by Lemma~\ref{lem0005} and \eqref{eq0023}, we have $\|u_{\lambda}\|\leq(1+d_*)C^*$.  So thanks to Lemma~\ref{lem0006} and the H\"older and Sobolev inequalities, we can see that $\|u_\lambda\|_{L^\infty(B_{r_*}(y))}\to0$ as $\lambda\to+\infty$ uniformly on a.e. $\bbr^3\backslash\Omega_a'$.  It follows that there exists $\Lambda_*\geq\Lambda_2$ such that $|u_\lambda(x)|\leq\delta_0$ a.e. on $\bbr^3\backslash\Omega_a'$ for $\lambda\geq\Lambda_*$.
\end{proof}

We close this section by

\medskip\par\noindent{\bf Proof of Theorem~\ref{thm0001}:}\quad By Proposition~\ref{prop0001}, $(\overline{\mathcal{S}}_{\lambda,\delta_0})$ has a nontrivial solution $u_\lambda$ for $\lambda\geq\Lambda_3$.  It follows that $(u_\lambda,\phi_{u_\lambda,\delta_0})$ is a nontrivial solution of $(\overline{\mathcal{SP}}_{\lambda,\delta_0})$ for $\lambda\geq\Lambda_3$.  Thanks to Lemma~\ref{lem0007} and the construction of $g_{\delta_0}(x,t)$, if $3<p\leq4$ and $\lambda\geq\Lambda_*$, then $(u_\lambda,\phi_{u_\lambda,\delta_0})$ is also a nontrivial solution of $(\mathcal{SP}_{\lambda})$.  It remains to show the concentration behavior of $(u_\lambda,\phi_{u_\lambda,\delta_0})$.  Suppose $(u_{\lambda_n},\phi_{u_{\lambda_n},\delta_0})$ is a sequence of solutions with $\lambda_n\to+\infty$.  Then $u_{\lambda_n}$ is also a solution of $(\overline{\mathcal{S}}_{\lambda_n,\delta_0})$, by similar arguments as used in Lemma~\ref{lem0006}, $u_{\lambda_n}\rightharpoonup u_0$ weakly in $\h$ for some $u_0\in H^1_0(\Omega_a)$ satisfying $u_0\equiv0$ outside $\Omega_a$ and $\int_{\bbr^3\backslash\Omega_a''}u_{\lambda_n}^2dx\to0$ as $n\to\infty$.  It follows from the condition $(A_2)$ and the choice of $\Omega_a''$ that $u_{\lambda_n}\to u_0$ strongly in $L^2(\bbr^3)$ as $n\to\infty$.  Note that the condition $(F_1)$ holds and $u_{\lambda_n}\rightharpoonup u_0$ weakly in $\h$ as $n\to\infty$, by the H\"older and Sobolev inequalities, we have $\int_{\bbr^3}f(x,u_{\lambda_n})u_{\lambda_n}-f(x,u_0)u_0dx\to0$ as $n\to\infty$, which together with the conditions $(A_1)$, $(B_3)$ and $(K_1)$ and the Fatou lemma, implies
\begin{eqnarray}
&&\int_{\bbr^3}|\nabla u_0|^2+b(x)u_0^2dx+\int_{\bbr^3}\int_{\bbr^3}\frac{K(x)K(y)u^2_0(y)u_0^2(x)}{4\pi|x-y|}dydx\notag\\
&\leq&\liminf_{n\to\infty}(\mathcal{D}_{\lambda_n}(u_{\lambda_n}, u_{\lambda_n})+\int_{\bbr^3}K(x)\phi_{u_{\lambda_n},\delta_0}u^2_{\lambda_n}dx)\notag\\
&=&\liminf_{n\to\infty}\int_{\bbr^3}f(x,u_{\lambda_n})u_{\lambda_n}dx\notag\\
&=&\int_{\bbr^3}f(x,u_0)u_0dx.\label{eq0021}
\end{eqnarray}
Since $u_0\in H^1_0(\Omega_a)$, it is easy to show that $u_0$ is a solution of \eqref{eq0022}.
Therefore, by \eqref{eq0021}, we must have $\mathcal{D}_{\lambda_n}(u_{\lambda_n}, u_{\lambda_n})\to\int_{\bbr^3}|\nabla u_0|^2+b(x)u_0^2dx$ as $n\to\infty$.  It follows that $u_{\lambda_n}\to u_0$ strongly in $\h$ as $n\to\infty$.  Since $c_{\lambda_n}\in[\frac{p-2}{p}\bigg(\frac{2d_*\ve_0}{pC_*}\bigg)^{\frac{2}{p-2}}, \frac12R_0^2+\frac{p-2}{p}\bigg(\frac{2d_*\ve_0}{pC_*}\bigg)^{\frac{2}{p-2}}+f_1(d_*R_0^2+d_*^{\frac{6-p}{4}}S^{-\frac{3p-6}{4}}R_0^p)]$ and $\frac{p-2}{p}\bigg(\frac{2d_*\ve_0}{pC_*}\bigg)^{\frac{2}{p-2}}$ and $\frac12R_0^2+\frac{p-2}{p}\bigg(\frac{2d_*\ve_0}{pC_*}\bigg)^{\frac{2}{p-2}}+f_1(d_*R_0^2+d_*^{\frac{6-p}{4}}S^{-\frac{3p-6}{4}}R_0^p)$ are independent of $n$, we also have $u_0\not=0$.  Note that $\{\lambda_n\}$ is arbitrary, so $u_\lambda$ has the concentration behaviors described as in this theorem.
\qquad\raisebox{-0.5mm}{\rule{1.5mm}{4mm}}\vspace{6pt}

\section{Acknowledgements}
This work was partly completed when Y. Wu was visiting National University of Kaohsiung, and he is grateful to the members in the department of applied mathematics at National University of Kaohsiung for their invitation and hospitality.  J. Sun was supported by the National Natural Science Foundation of China
(Grant No. 11201270, No.11271372), Shandong Natural Science Foundation (Grant No. ZR2012AQ010) and Young Teacher Support Program of Shandong
University of Technology.  T.-F. Wu was supported in part by the National Science Council and the National Center for Theoretical Sciences, Taiwan.  Y. Wu was supported by the Fundamental Research Funds for the Central Universities (2014QNA67).

\end{document}